\def\Z{\mathbb Z}
\def\Q{\mathbb Q}
\def\F{\mathbb F}
\newcommand{\PDiv}{\mathrm{PDiv}}
\newcommand{\End}{\mathrm{End}}
\newcommand{\Mat}{\mathrm{Mat}}
\newcommand{\Tr}{\mathrm{Tr}}
\newcommand{\AJ}{\mathrm{AJ}}
\newcommand{\Coker}{\mathrm{Coker}}
\newcommand{\Sym}{\mathrm{Sym}}
\newcommand{\Div}{\mathrm{Div}}
\newcommand{\R}{\mathbb{R}}
\newcommand{\MHS}{\mathrm{MHS}}
\newcommand{\Ext}{\mathrm{Ext}}
\newcommand{\divv}{\mathrm{div}}
\newcommand{\gr}{\mathrm{gr}}
\newcommand{\reg}{\mathrm{reg}}
\def\Aut{\mathrm{Aut}}
\def\cl{\mathrm{cl}}
\theoremstyle{plain}
\newtheorem{theorem}{Theorem}
\newtheorem{conjecture}{Conjecture}
\newtheorem{lemma}{Lemma}
\newtheorem{proposition}{Proposition}
\theoremstyle{definition}
\newtheorem{definition}{Definition}
\newtheorem{example}{Example}
\newtheorem{remark}{Remark}
\DeclareMathOperator{\ab}{ab}
\DeclareMathOperator{\un}{un}
\newcommand{\Jac}{\mathrm{Jac}}
\DeclareMathOperator{\Gal}{Gal}
\DeclareMathOperator{\dR}{dR}
\newcommand{\Pic}{\mathrm{Pic}}
\newcommand{\colim}{\mathrm{colim}}
\DeclareMathOperator{\loc}{loc}
\newcommand{\NS}{\mathrm{NS}}
\newcommand{\Ker}{\mathrm{Ker}}
\newcommand{\Hom}{\mathrm{Hom}}
\newcommand{\CH}{\mathrm{CH}}
\newcommand{\et}{\mathrm{et}}
\newcommand{\Alb}{\mathrm{Alb}}
\newcommand{\cris}{\mathrm{cris}}
\DeclareMathOperator{\ext}{Ext}
\DeclareSymbolFont{cyrletters}{OT2}{wncyr}{m}{n}
\DeclareMathSymbol{\Sha}{\mathalpha}{cyrletters}{"58}
\begin{document}
\title{Generalised height pairings and the Albanese kernel}
\author{Netan Dogra}

\begin{abstract}
The Chabauty--Coleman--Kim method in depth two describes the rational points on a curve in terms of a generalisation of Nekov\'a\v r's $p$-adic height pairing which replaces $\mathbb{G}_m$ with a higher Chow group. It is unclear both what the domain of definition of this pairing is, and how to compute it. This paper explores the relevance of the Beilinson--Bloch conjectures to this problem. In particular, it is shown that if $X$ is a smooth projective curve and the Albanese kernel of $X\times X$ is torsion, then there is an algorithm to compute the generalised height pairing on a pair of rational points on the Jacobian. This leads to the consideration of certain `motivic refinements' of the nonabelian cohomology varieties which arise in nonabelian Chabauty.
\end{abstract}
\dedicatory{To Minhyong Kim on the occasion of his 61st birthday}

\maketitle

\tableofcontents
\section{Introduction}
The Bloch--Kato conjectures \cite{BK} make a striking prediction about the space of extensions of Galois representations which come from geometry. More precisely, if $V$ and $W$ are finite dimensional Galois representations we can consider the set of isomorphism classes of short exact sequences of Galois representations
\[
0\to W\to E\to V\to 0.
\]
This forms a group under Baer summation, isomorphic to the Galois cohomology group $H^1 (K,V^* \otimes W)$. In the case where $V=\Q _p $ and $W=H^{2i-1}_{\et }(X_{\overline{\Q }},\Q _p (i))$, where $X$ is a smooth projective variety over $\Q $, an extension comes from geometry if it is in the image of the map
\[
\cl _p :\CH ^i (X)_0 \otimes \Q _p \to \Ext ^1 _{\Gal (K)}(\Q _p ,H^{2i-1}(X_{\overline{K}},\Q _p (i)))
\]
which is given (\cite{deligne:valeurs}, \cite[II.9]{jannsen1990}) by sending a homologically trivial cycle $Z=\sum n_i Z_i $ to the extension of $\Q _p $ by $H^{2i-1}_{\et }(X_{\overline{\Q }},\Q _p (i))$ obtained by pulling back the exact sequence
\[
0\to H^{2i-1}_{\et }(X_{\overline{\Q }},\Q _p (i))\to H^{2i-1}_{\et }(X_{\overline{\Q }}-|Z| ,\Q _p (i))\to H^{2i} _{| Z| }(X_{\overline{\Q }},\Q _p (i))
\]
along the cycle class $\Q _p \to H^{2i}_{|Z|}(X_{\overline{\Q }},\Q _p (i))$ of $Z$ (where $|Z|:=\cup _i Z_i $). Bloch and Kato conjecture that every extension of $\Q _p $ by $H^{2i-1}_{\et }(X_{\overline{\Q }},\Q _p (i))$ unramified outside a finite set of primes, and de Rham at $p$, can be written as a $\Q _p$-linear combination of elements of $\CH ^i (X)_0 $, and furthermore that such a presentation is unique modulo torsion (i.e. the map $\cl _p $ is injective).

In Kim's nonabelian generalisation of the Chabauty--Coleman method, one is instead interested in \textit{iterated} extensions (henceforth referred to as \textit{mixed} extensions) of geometric Galois representations. The graded pieces of these iterated extensions are quotients of tensor powers of the Tate module of the Albanese of the original variety $X$. The utility of working with this space of iterated extensions is that, while the space of geometric extensions of $\Q _p $ by $H^1 _{\et }(X_{\overline{\Q }},\Q _p (1))$ may be large, conjectures on the dimensions of global Galois cohomology groups predict the space of geometric extensions of $\Q _p $ by higher tensor powers are `small', and in particular most local iterated extensions do not come from global iterated extensions, giving an obstruction to a $p$-adic point coming from a rational point.

More precisely, let $b\in X(\Q )$ and let $U_n $ be the maximal $n$-unipotent quotient of the $\Q _p $-unipotent completion of the \'etale fundamental group of $X_{\overline{\Q }}$ with basepoint $b$. Then one defines $X(\Q _p )_n \subset X(\Q _p )$ to be the set of points extending to an adelic point $(x_v )\in X(\mathbb{A})$ whose class in $\prod _v H^1 (\Q _v ,U_n )$ lies in the (scheme-theoretic) image of $H^1 (\Q ,U_n )$. More generally, for any Galois stable quotient $U$ of $U_n$, one can similarly define a subset $X(\Q _p )_U $ of $X(\Q _p )$.

\subsection{Applications to quadratic Chabauty}
In this article we will only discuss the depth two case of nonabelian Chabauty. This is sometimes referred to as quadratic Chabauty, although it may be helpful to first disambiguate that term slightly. Let $V:=\Q _p \otimes _{\Z _p }T_p \Jac (X)$. Let $U_2 ^T$ be the maximal quotient of $U_2 $ whose commutator subgroup is isomorphic to a direct sum of copies of $\Q _p (1)$, and let $U_2 ^A$ be the maximal quotient of $U_2 $ whose commutator subgroup is a twist of an Artin representation by $\Q _p (1)$. Then we have a commutative diagram with surjective vertical maps and exact rows
\[
\begin{tikzcd}
1 \arrow[r] & \Ker (H^2 (J_{\overline{\Q }},\Q _p )\stackrel{\AJ ^* }{\longrightarrow }H^2 (X_{\overline{\Q }},\Q _p ))^* \arrow[d] \arrow[r] & U_2 \arrow[d] \arrow[r] & V \arrow[d] \arrow[r] & 1 \\
1 \arrow[r] & \Ker (\NS (J_{\overline{\Q }})\stackrel{\AJ ^* }{\longrightarrow }\NS (X))^* \otimes \Q _p (1)\arrow[d] \arrow[r] & U_2 ^A \arrow[d] \arrow[r] & V \arrow[d] \arrow[r] & 1 \\
1 \arrow[r] & \Ker (\NS (J_{\Q })\stackrel{\AJ ^* }{\longrightarrow }\NS (X))^* \otimes \Q _p (1) \arrow[r]           & U_2 ^T \arrow[r]           & V \arrow[r]           & 1.
\end{tikzcd}
\]
Where necessary, to ease notation we will refer to the sets $X(\Q _p )_{U_2 ^T},X(\Q _p )_{U_2 ^A}$ and $X(\Q _p )_2 $ as the skimmed, semi-skimmed and full-fat quadratic Chabauty loci.

In the most well-studied case of skimmed quadratic Chabauty, equations for $X(\Q _p )_{U_2 ^T}$ are obtained in terms of the cyclotomic $p$-adic height \cite{nekovar}. Recall that in Nekov\'a\v r's approach to $p$-adic heights \cite{nekovar}, a local $p$-adic height is a function on local mixed extensions (i.e. iterated extensions in the category of Galois representations of a local field) with graded pieces $\Q _p ,V$ and $\Q _p (1)$. Summing over all places gives a global height, which has a bilinearity property. From the perspective of nonabelian Chabauty, the bilinearity of the global height is a reciprocity law giving a condition for a collection of local Galois representations to come from a global one, and hence for an adelic point to come from a rational point.

In semi-skimmed quadratic Chabauty one does the same thing with $\Q _p (1)$ replaced by $\Q _p (1)\otimes \rho _0 $ where $\rho _0$ is an Artin representation. The reciprocity law then involves $p$-adic heights over number fields associated to more general idele class characters. 

In full-fat quadratic Chabauty, one assigns to each pair of rational points on a smooth projective variety $X$ a Galois representation which is filtered with graded pieces $\Q _p ,V:=H^1 _{\et }(X_{\overline{\Q}},\Q _p (1))$ and $\Coker (H^1 _{\et }(X_{\overline{\Q}},\Q _p )^{\otimes 2}\stackrel{\cup }{\longrightarrow }H^2 _{\et }(X_{\overline{\Q }},\Q _p ))^* $. We write $M_g (\Q ;\Q _p ,V,V^{\otimes 2}/\Q _p (1))$ for the set of such iterated extensions, with suitable local conditions (see \S \ref{sec:me} for details). This is a nonabelian cohomology variety which is slightly larger than the Selmer scheme considered by Kim. However, working with this larger space has benefits: it behaves more like the Selmer scheme of a elliptic curve minus the origin with tangential basepoint studied by Kim in \cite{kim:massey}. The reciprocity law giving a condition for a collection of local mixed extensions to come from a global mixed extension is then the \textit{generalised height} pairing introduced in \cite{QC2}. This has been used in \cite{QC2} to determine rational points on bihyperelliptic curves of the form $y^2 =x^6+ax^4+ax^2 +1$, and in \cite{dogra2023} to determine the rational points on the hyperelliptic curve $y^2-y=x^5-x$. 

There are many obstacles to turning full-fat quadratic Chabauty into a practical method for computing these obstructions. The first is verifying these conjectures on dimensions of Galois cohomology groups, i.e. verifying the expectation that most local representations do not come from global ones.\footnote{see \cite{berry2025refined} for recent progress.} The second is computing the generalised height pairing.

To explain why this is a difficult problem, consider the cases of classical Chabauty--Coleman and skimmed quadratic Chabauty. Let $X$ denote a smooth projective geometrically irreducible curve, and $J$ its Jacobian. In the case of classical Chabauty, one determines the Chabauty--Coleman locus $X(\Q _p )_1$ by finding a set of zero-cycles on $X$ generating a finite index subgroup of $J(\Q )$. In skimmed quadratic Chabauty, one needs to determine the values of the $p$-adic height pairing. By analogy with the abelian case, one might hope that one can do this by evaluating the $p$-adic height function on points over number fields. However it is far from obvious that this works: for example in the case of integral points on a rank $=$ genus hyperelliptic curve, one can determine the $p$-adic height pairing from $K$-points on $X$ (for $K/\Q $ a Galois extension, possibly infinite) if and only if $\Sym ^2 J(\Q )\otimes \Q $ is contained in the subgroup of $(\Sym ^2 J(K)\otimes \Q )^{\Gal (K|\Q )}$ generated by $(x-\infty )^2 $, for $x\in X(K)$ and $\infty $ a Weierstrass point. 

In practice one instead computes the $p$-adic height pairing using a more general construction -- e.g. using algorithms for computing local height pairings between divisors $D_1 $ and $D_2$ with disjoint support. From the perspective of mixed extensions, this can be interpreted as constructing all mixed extensions with graded pieces $\Q _p ,V,\Q _p (1)$ coming from geometry via subquotients of Galois representations of the form $H^1 _{\et }(X_{\overline{\Q }}-|D_1 |;|D_2 |)(1)$ (see \cite{nekovar}, \cite{scholl}).

\subsection{The Albanese kernel}
In this paper we explain how to compute the generalised height pairings arising in quadratic Chabauty assuming the conjecture of Beilinson--Bloch that the Albanese kernel of $X\times X$ is torsion. Recall that the Albanese kernel $T(X^2 )$ is the kernel of the map
\[
\CH ^2 (X^2 )_0 \to \Jac (X)^2 
\]
defined by the two projections $X^2 \to X$.

Let $h$ denote the generalised height on mixed extensions with graded pieces $\Q _p ,V$ and $W:=\wedge ^2 V/(\NS (J_{\overline{\Q }})^* \otimes \Q _p (1))$ (see section \ref{sec:ht}). This may be viewed either as a function on $X(\Q )$, or as a function on $H^1 _f (\Q ,V)\times H^1 _f (\Q ,V^* \otimes W)$, or as a function on $J(\Q )\times \CH ^2 (X^3 )_0$. As explained below, the relation between these is via the cycle class map
\[
J(\Q )\times \CH ^2 (X^3 )_0 \to H^1 (\Q ,V)\times  H^1 (\Q ,V^* \otimes W)
\]
and the map
\[
X(\Q )\to J(\Q ) \times \CH ^2 (X^3 )_0
\]
sending $z$ to $(z-b,\iota _1 (z-b)+\Delta (b))$. Here $\Delta (b)\in \CH ^2 (X^3 )_0$ is the Gross--Schoen cycle associated to $b\in X(\Q )$ and $\iota _1 $ is a map $J(\Q ) \to \CH ^2 (X^3 )_0$. In \S \ref{subsec:curve} a definition is given of a generalised height $h_{\iota }(D)$ on $X$ associated to a principal divisor $D$ on a curve $\iota :C \hookrightarrow X\times X$.  To ease notation, we will suppress the $\iota _1 $, and write the generalised height pairing of $P$ in $J(\Q )$ with $\iota _1 (Q) +Z$, for $Q\in J(\Q )$ and $Z\in \CH^2 (X^3 )_0 $, if it is defined, as
\[
h(P,Q+Z).
\]
If $Z=0$ and $P=Q$, we will write this pairing as $h(P)$.
\begin{theorem}\label{thm:only_one}
Assuming the Beilinson--Bloch conjecture, there is an algorithm which, given a pair of points $P_1 , P_2 \in J(\Q ) $, returns integers $m_i ,n_j $, points $Q_j \in X(\overline{\Q })$, a finite set of curves $\iota _i :C_i \hookrightarrow X_{\overline{\Q }}\times X_{\overline{\Q }}$, and principal divisors $D_i $ on $C_i$, such that the generalised height pairing $h(P_1 ,P_2+\Delta (b))$ is equal to
\[
\sum \frac{1}{m_i }h_{\iota _i }(D_i )+\sum n_j h(Q_j ).
\]
\end{theorem}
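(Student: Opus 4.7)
The plan is to exploit the bilinearity of the generalised height pairing in its second argument and to reduce the theorem to a decomposition problem in $\CH^2(X^3)_0 \otimes \Q$. If we can write the class of $\Delta(b) + \iota_1(P_2)$ modulo rational equivalence as a sum of classes of two special kinds---pushforwards of principal divisors from curves $\iota_i : C_i \hookrightarrow X \times X$, and classes supported at individual points $Q_j$---then bilinearity of $h(P_1,-)$, together with the definition of $h_\iota$ in \S\ref{subsec:curve} and the interpretation of $h$ as a function on $X$, yields the formula. The problem thus splits into an algebraic-geometric step (finding the decomposition) and a motivic step (verifying that the terms of the decomposition contribute the stated specialisations of the height).

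\textbf{Reduction and algorithm.} Before applying the hypothesis, one must replace $\Delta(b)+\iota_1(P_2)\in \CH^2(X^3)_0$ by a cycle class on $X\times X$: by construction $\iota_1$ factors through a correspondence involving $X\times X$, and the Gross--Schoen cycle is built from the diagonals in $X^3$, which split via K\"unneth-type motivic projectors into pieces whose rational equivalence is controlled by the Albanese kernel of $X\times X$. Granting that $T(X\times X)$ is torsion, every class in $\CH^2(X\times X)_0 \otimes \Q$ is detected by its image in $\Jac(X)^2 \otimes \Q$, so testing a proposed rational equivalence becomes a finite-dimensional linear-algebra calculation. The algorithm then enumerates, for each bidegree $(a,b)$, the finite-dimensional families of curves $C_i \hookrightarrow X\times X$ of bidegree $(a,b)$ together with their rational functions of bounded degree, and auxiliary points $Q_j$ with integer coefficients $n_j$; for each candidate decomposition it checks whether the Albanese images of both sides agree. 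Since the Beilinson--Bloch hypothesis guarantees existence of some such decomposition, the search terminates in finite time.

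\textbf{Main obstacle.} The principal difficulty is the motivic step: transferring the decomposition in $\CH^2$ to a decomposition at the level of mixed extensions of Galois representations. The generalised height is not a function on Chow groups directly, but on Selmer-type classes obtained via the cycle class map, and one needs to check that additivity of cycles corresponds to Baer summation of mixed extensions, that the contribution from a principal divisor on a curve $C_i \hookrightarrow X\times X$ is given exactly by $h_{\iota_i}(\divv(D_i))$ as defined in \S\ref{subsec:curve}, and that the point contributions recover $h(Q_j)$. This amounts to a generalised-height analogue of the classical computation of $p$-adic heights via divisors of disjoint support \cite{nekovar, scholl}, and its proof will rely on the explicit descriptions of mixed extensions in \S\ref{sec:me}.
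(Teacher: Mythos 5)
Your strategy diverges from the paper's at the very first step, and in a way that creates a genuine gap. You propose to decompose the second argument $\Delta(b) + \iota_1(P_2)$ directly in $\CH^2(X^3)_0 \otimes \Q$, but the Beilinson--Bloch hypothesis assumed in the theorem concerns the Albanese kernel of $X\times X$, not of $X^3$. Worse, the Gross--Schoen cycle $\Delta(b)$ is exactly the sort of class that generically lies nontrivially in the Griffiths group of $X^3$, so there is no reason to expect it to decompose into pushforwards of principal divisors from curves in $X\times X$; the appeal to ``K\"unneth-type motivic projectors'' controlling its rational equivalence via $T(X\times X)$ is unsubstantiated and, as far as I can see, false. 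The paper avoids this entirely by decomposing the \emph{first} argument: writing $D_1 = \sum m_i Q_i$ and invoking Lemma~\ref{lemma:IE} --- the identity $[IE(b,Q_i)] = [I/I^3(b)] + i_{1*}[E_1(b,Q_i)]$ --- to split each term $h(Q_i - b,\, I/I^3(b) + P_2)$ as $h(Q_i) + h(Q_i - b,\, P_2 - (Q_i - b))$. The first piece is the height of the genuine mixed extension $E(X;b,Q_i)$, which is where the $\sum n_j h(Q_j)$ terms come from; the second has a pure Jacobian class as its second argument, so Lemma~\ref{lemma:only_one} applies. Without Lemma~\ref{lemma:IE} or a substitute for it, your approach has no mechanism to produce the point contributions $h(Q_j)$ or to land in $\CH^2(X\times X)$.

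Separately, you flag the ``motivic step'' --- passing from a decomposition in the Chow group to a decomposition of mixed extensions --- as the main obstacle and leave it unresolved. This is not a loose end to be tidied later: it is the content of Lemma~\ref{lemma:find_div}, which shows that a principal divisor on a curve $C\hookrightarrow X\times X$ with Jacobian isogenous to $J\times J\times A$ gives a class in the image of $H^1_f(\Q, U(\Q_p, V^{\oplus n}, V^{\otimes 2}/\Q_p(1)))$, and is precisely what makes $h_{\iota}(\divv(D))$ a meaningful contribution to the generalised height. A proposal that identifies this as an open difficulty, rather than supplying the argument, is incomplete on the point where the theorem actually has content.
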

%
%

One of the original visions of the Chabauty--Coleman--Kim method (or ``recurring fantasies'', in the language of the Beilinson--Bloch conjectures) was that the Bloch--Kato conjectures should imply the existence of an algorithm for determination of the rational points on curves \cite{kim:remarks}. As is pointed out in loc. cit., by analogy with the Birch--Swinnerton-Dyer conjecture one would actually hope for an algorithm whose \textit{termination} is conditional on these conjectures, meaning that one never needs to verify the Beilinson--Bloch conjectures for a variety in order to run the algorithm, but if the conjectures are false then the process is not guaranteed to terminate.

The present paper can be seen as an elaboration on one special case of that idea.\footnote{With the caveat that Chabauty methods are essentially \textit{never} known to give algorithms for determining rational points, because of well documented problems with possible nonrational points in $X(\Q _p )_n $ or multiplicities of zeroes of the relevant $p$-adic power series (see \cite{BS10} for a comprehensive description).} Namely, we show that the problem of determining equations for $X(\Q _p )_2 $ essentially reduces to verifying (or more accurately, exhibiting) that certain elements of the Albanese kernel of $X\times X$ are torsion. To explain this, 

\subsection{Motivic avatars}
Theorem \ref{thm:only_one} is proved by showing how to explicitly construct mixed extensions coming from geometry from fundamental groups of curves inside $X\times X$, or equivalently (by Beilinson) from Galois representations of the form $H^2 _{\et }(X^2 _{\overline{\Q }}-|C|,\Q _p (2))$, where $C$ is a cycle of codimension one on $X^2$. This suggests the possibility of constructing a `motivic avatar' of the Selmer scheme, or more general nonabelian cohomology varieties arising in the Chabauty--Coleman--Kim method, which can be used to compute the localisation map (and hence the equations for $X(\Q _p )_n$). In section \ref{sec:alb}, we explain how the proof of Theorem \ref{thm:only_one} can be viewed in terms of the construction of a set $\mathcal{M}(X\times X)$ sitting in a short exact sequence
\[
\CH ^2 (X^2 ,1)\to \mathcal{M}(X\times X)\to \Jac (X)^2 \stackrel{\boxtimes }{\longrightarrow } T(X^2 )
\]
which is compatible with the usual 7-term exact sequence for a central extension in nonabelian group cohomology (see Proposition \ref{prop:bigdiagram}).

In \cite{EL23}, Edixhoven and Lido observe that the motivic avatar of the set of mixed extensions arising in the theory of $p$-adic heights is exactly the Poincar\'e torsor $P^\times $, which is a $\mathbb{G}_m $-torsor over $J\times J$
\begin{equation}\label{eqn:poincare}
1\to \mathbb{G}_m \to P^\times \to J\times J\to 1.
\end{equation}
This gives a `motivic' description of skimmed quadratic Chabauty. In practice it is often easier to compute the $p$-adic height pairing by working direct with pairs of divisors with disjoint support rather than a model of the Poincar\'e torsor, in the same way that often in classical Chabauty--Coleman one works with divisors on the curve rather than a model of the Jacobian. 

One can extend the Poincar\'e torsor to describe semi-skimmed quadratic Chabauty. One can think of this as involving working with $T$-torsors $\widetilde{P}^\times $ on $J$, or other abelian varieties, where $T$ is a nonsplit torus, or alternatively as simply working with the Poincar\'e torsor over number fields. However, beyond semi-skimmed quadratic Chabauty one would not expect any kind of motivic avatar of the Selmer scheme. A precise statement in this regard is the recent paper of Rogov which proves that when $n\geq 3$, Hain's depth $n$ higher Albanese manifold (for a hyperbolic curve $X/\mathbb{C}$) is not algebraisable \cite{rogov2025minimal}. An archetypal example of the kind of non-algebraic structure that can arise is that of `mixed Tate quadratic Chabauty' -- i.e. the study of $X(\Z _p )_2$ when $X$ is $\mathbb{P}^1 -\{ 0,1,\infty \}$. Here the relevant bilinear form comes from the $p$-adic dilogarithm function \cite{DCW}. Then there is a precise analogue of the exact sequence \eqref{eqn:poincare} defined in terms of the abelian category of mixed Tate motives. However, from a computational perspective, arguably a more useful analogue is the related (but slightly different) Bloch--Suslin complex.\footnote{perhaps a strict analogue would be a short exact sequence of pointed sets constructed from the Bloch--Suslin complex, see \S \ref{subsec:BS}.} Instead of being short exact, this sequence has four terms, with the Milnor $K$-group $K^M _2 (\Q )$ providing an obstruction analogous to the obstruction coming from the Albanese kernel. In summary we have the following table of (sometimes imprecise) analogies.

\begin{center}
\begin{tabular}{ c| c c c}
  & Galois representations & motivic avatar & obstruction \\ \hline
 Chabauty & $\left( \begin{array}{cc}1 & 0 \\ * & \rho _V \end{array} \right) $ &  $\Jac (X)$ & \shortstack{$\Sha (J)$} \\  
\shortstack{skimmed \\ quadratic \\Chabauty} & $\left( \begin{array}{ccc}1 & 0 & 0 \\ * & \rho _V & 0  \\ * & * & \chi \end{array} \right) $ & $P^\times $     &   \\ 
\shortstack{semi-skimmed \\ quadratic \\Chabauty} & $\left( \begin{array}{ccc}1 & 0 & 0 \\ * & \rho _V & 0  \\ * & * & \rho _0 \otimes \chi \end{array} \right) $ & \shortstack{twisted \\ Poincar\'e torsor}     &  $\Sha (T)$  \\ 
\shortstack{mixed Tate \\ quadratic \\Chabauty} & $\left( \begin{array}{ccc}1 & 0 & 0 \\ * & \chi & 0  \\ * & * & \chi ^2 \end{array} \right) $ & \shortstack{Bloch--Suslin \\ complex} & $K^M _2 (\Q )$ \\  
\shortstack{full-fat \\ quadratic \\Chabauty} & $\left( \begin{array}{ccc}1 & 0 & 0 \\ * & \rho _V & 0  \\ * & * & \rho _V ^{\otimes 2} \end{array} \right) $ & $\mathcal{M}(X^2 )$ & $T(X^2)$ \\  
\end{tabular}
\end{center}
Note that the notion of `motivic avatars' we are discussing is \textit{not}, a priori, the same as the notion of `motivic' nonabelian Chabauty--Kim which arises in work of Dan--Cohen, Wewers and Corwin \cite{DCW} \cite{CDC}. Recall that if there were a Tannakian category of mixed motives over $\Q $, then one could view the space of mixed extensions which come from motives as nonabelian cohomology of a motivic Galois group (the fundamental group of the Tannakian category), and to try to compute the image of the map from nonabelian cohomology of the motivic Galois group to nonabelian cohomology of the Galois group of $\Q _p $. There are two obstacles to this approach.
\begin{enumerate}
\item We do not know if there is a Tannakian category of mixed motives.
\item Even if we did, it is not clear how to compute the $p$-adic periods of its objects without some explicit geometric realisation.
\end{enumerate}
To elaborate briefly on the second point: Dan--Cohen, Wewers, Corwin \cite{DCW} \cite{CDC} and others have used the Tannakian category of mixed Tate motives to great effect in the study of Chabauty--Kim for the $S$-unit equation. Whilst the existence of such a category greatly clarifies structural properties, as far as the author is aware one still needs a geometric realisation of these mixed Tate motives in order to compute their $p$-adic periods. However, perhaps using an explicit description of the Galois group of mixed Tate motives in terms of algebraic cycles would give a practical approach to this computational problem \cite{bloch}.

\subsection{Conventions}
Group cohomology of a profinite group refers to continuous group cohomology. If $K$ is a finite extension of $\Q _{\ell }$ with residue field $k$ and $W$ is a finite dimensional $\Q _p $-representation of $G_K$, we write $H^1 _f (K ,W)$ to mean $H^1 (G_k ,W^{I_K})$ if $\ell \neq p$ and $\Ker (H^1 (G_K ,W)\to H^1 (G_K ,W\otimes B_{\cris }))$ if $\ell = p$. We write $H^1 _g (K ,W)$ to mean $H^1 (K,W)$ if $\ell \neq p$ and $\Ker (H^1 (K,W)\to H^1 (K,W\otimes B_{\dR}))$ if $\ell = p $. If $K$ is a number field and $W$ is a $\Q _p$-representation unramified outside all but a finite set $S_0$ of primes and de Rham at all primes dividing $p$, and $S$ is a finite set of primes, we write $H^1 _{f,S}(K,W)$ to mean the subspace of $H^1 (G_{K,S\cup S_0 },W)$ consisting of classes whose localisation at a prime $v$ lies in $H^1 _f (G_{K_v},W)$ if $v$ is not in $S$, and lies in $H^1 _g (G_{K_v },W)$ if $v$ lies in $S$. If $S$ is empty, this is written simply as $H^1 _f (K,W)$. Finally, we write  $H^1 _g (K,W)$ to mean the direct limit, over finite subsets $S$ of primes of $K$, of $H^1 _{f,S}(K,W)$.

To lighten notation, if $X$ is a variety over $K$, in this article we will always write $H^i _{\et }(X,\Q _p (j))$ to mean $H^i _{\et }(X_{\overline{K}},\Q _p (j))$. Apologies for any confusion caused.
\subsection*{Acknowledgements}
It is a pleasure to be able to thank Minhyong Kim for the opportunity to spend so much time in the mathematical world he has created, and for his generosity and optimism. This paper benefited from many discussions with Carl Wang-Erickson about mixed extensions, and with Jonathan Love about the Albanese kernel. I am extremely grateful to the anonymous referee for numerous corrections to an earlier version of this paper.

This research is supported by a Royal Society university research fellowship.
\section{The generalised height}\label{sec:ht}

\subsection{Mixed extensions}\label{sec:me}
Given objects $V_0 ,\ldots ,V_n$ in an abelian category $\mathcal{C}$, we can define a mixed extension with graded pieces $V_0 ,\ldots ,V_n$ to be an object $W$ of $\mathcal{C}$ together with a filtration by subobjects
\[
W=W_0 \hookleftarrow W_1 \hookleftarrow \ldots \hookleftarrow W_{n+1} =0
\]
together with isomorphisms $\psi _i :V_i \simeq W_{i}/W_{i+1}$. We define a morphism of mixed extensions $(W,(W_i ),(\psi _i ))\to (W',(W'_i ),(\psi ' _i ))$ to be an isomorphism $W\to W'$ in $\mathcal{C}$, respecting the filtrations and commuting with the $\psi _i$ and $\psi _i '$. We write $\mathcal{M}(\mathcal{C};V_0 ,\ldots ,V_n )$ to be the category of mixed extensions with graded pieces $V_0 ,\ldots ,V_n $ in $\mathcal{C}$, and write $M(\mathcal{C};V_0 ,\ldots ,V_n )$ for $\pi _0 $ of this category (i.e. the set of isomorphism classes of mixed extensions with these graded pieces).

If $\mathcal{C}$ is the category of continuous finite dimensional $\Q _p$-representations of a group $G$, then we can define a group $U(V_0 ,\ldots ,V_n )$ of unipotent automorphisms\footnote{here we view $\oplus _{i=0}^n V_i$ as a mixed extension of the $V_i$ in the obvious way. A unipotent automorphism of a mixed extension is an automorphism respecting the filtration and acting as the identity on the associated graded (i.e. an endomorphism of mixed extensions).} of $V_0 \oplus \ldots \oplus V_n $ as a $\Q _p $-vector space, and we have a bijection
\begin{equation}\label{eqn:iso_coh}
M(\mathcal{C};V_0 ,\ldots ,V_n )\simeq H^1 (G,U(V_0 ,\ldots ,V_n )).
\end{equation}
For example, when $n=1$, we have $M(\mathcal{C};V_0 ,V_1 )\simeq \Ext ^1 _G (V_0 ,V_1 )$, and we recover the usual isomorphism
\[
\Ext ^1 _G (V_0 ,V_1 )\simeq H^1 (G,V_0 ^* \otimes V_1 ).
\]
In the case $n=2$, we have an exact sequence
\begin{equation}\label{eqn:ES}
H^1 (G,V_0 ^* \otimes V_2 )\to H^1 (G,U(V_0 ,V_1 ,V_2 ))\to H^1 (G,V_0 ^* \otimes V_1 )\times H^1 (G,V_1 ^* \otimes V_2 )\to H^2 (G,V_0 ^* \otimes V_2 ).
\end{equation}
\begin{lemma}
The boundary map
\[
H^1 (G,V_0 ^* \otimes V_1 )\times  H^1 (G,V_1 ^* \otimes V_2 )\to H^2 (G,V_0 ^* \otimes V_2 )
\]
associated to the central extension
\[
1\to V_0 ^* \otimes V_2 \to U(V_0 ,V_1 ,V_2 )\to V_0 ^* \otimes V_1 \oplus V_1 ^* \otimes V_2 \to 1
\]
is given by the usual cup product. 
\end{lemma}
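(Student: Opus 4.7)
The plan is to choose an explicit set-theoretic section of the central extension, compute the resulting $2$-cocycle by direct matrix multiplication, and match it against the standard formula for the cup product.

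First I would fix a concrete description of $U(V_0,V_1,V_2)$: it is the group of lower triangular unipotent $3\times 3$ matrices
\[
\begin{pmatrix} 1 & 0 & 0 \\ a & 1 & 0 \\ c & b & 1 \end{pmatrix},\qquad a\in V_0^*\otimes V_1,\ b\in V_1^*\otimes V_2,\ c\in V_0^*\otimes V_2,
\]
with product law determined by the fact that matrix multiplication gives $a\mapsto a_1+a_2$, $b\mapsto b_1+b_2$, $c\mapsto c_1+c_2+b_1\circ a_2$. Under the identification $V_0^*\otimes V_2\simeq\Hom(V_0,V_2)$ embedded as the matrices with $a=b=0$, this exhibits $V_0^*\otimes V_2$ as the centre, with quotient $V_0^*\otimes V_1\oplus V_1^*\otimes V_2$. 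The natural contraction map $(V_0^*\otimes V_1)\otimes(V_1^*\otimes V_2)\to V_0^*\otimes V_2$ is $f\otimes g\mapsto g\circ f$, and this is precisely the map that the commutator of $U$ induces on the abelianised quotient.

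Next I would take the section $s(a,b):=\bigl(\begin{smallmatrix}1&0&0\\a&1&0\\0&b&1\end{smallmatrix}\bigr)$. A short calculation with the product law above gives
\[
s(a_1,b_1)\,s(a_2,b_2)=s(a_1+a_2,b_1+b_2)\cdot (b_1\circ a_2),
\]
so the $2$-cocycle in $H^2$ of the abstract extension is $\omega\bigl((a_1,b_1),(a_2,b_2)\bigr)=b_1\circ a_2$. Given classes represented by cocycles $\alpha\in Z^1(G,V_0^*\otimes V_1)$ and $\beta\in Z^1(G,V_1^*\otimes V_2)$, I then lift $(\alpha,\beta)$ to the cochain $\tilde c(\sigma):=s(\alpha_\sigma,\beta_\sigma)$ and compute the coboundary
\[
d\tilde c(\sigma,\tau)=\tilde c(\sigma)\cdot{}^\sigma\tilde c(\tau)\cdot\tilde c(\sigma\tau)^{-1}.
\]
Plugging in and using the cocycle condition $\alpha_{\sigma\tau}=\alpha_\sigma+\sigma\alpha_\tau$ (and likewise for $\beta$) to cancel the outer factors modulo the centre, one reads off $d\tilde c(\sigma,\tau)=\beta_\sigma\circ\sigma(\alpha_\tau)$. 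This is manifestly the standard formula $(\beta\cup\alpha)(\sigma,\tau)=\beta_\sigma\cdot\sigma(\alpha_\tau)$ for the cup product $H^1(G,V_1^*\otimes V_2)\otimes H^1(G,V_0^*\otimes V_1)\to H^2(G,V_0^*\otimes V_2)$, composed with the contraction above.

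The only real pitfall is keeping track of ordering and sign conventions: whether the pairing should read as $\alpha\cup\beta$ or $\beta\cup\alpha$ depends on the choice of whether to parametrise $U$ by upper- or lower-triangular matrices, on whether one uses the standard cup product formula $(\alpha\cup\beta)(\sigma,\tau)=\alpha_\sigma\cdot\sigma\beta_\tau$ or its transpose, and on whether one takes $s(a,b)$ or $s(b,a)$. Since the two possible conventions differ by a sign (one can pass between them via the graded commutativity of the cup product) the lemma statement is really convention-independent, and the only substantive task is the explicit cocycle computation above.
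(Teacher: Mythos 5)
Your proposal is correct, and it is essentially the approach the paper takes: the paper simply defers to a cochain-level computation (``This can be checked on the level of cochains, see \cite[Lemma 20]{dogra2023}''), which is exactly the explicit section-and-coboundary calculation you carry out. Your matrix multiplication correctly yields the cocycle $(\sigma,\tau)\mapsto\beta_\sigma\circ\sigma(\alpha_\tau)$, which is the cup product under the contraction $(V_1^*\otimes V_2)\otimes(V_0^*\otimes V_1)\to V_0^*\otimes V_2$, and your closing remark about sign/ordering conventions being immaterial is apt.
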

\begin{proof}
This can be checked on the level of cochains, see e.g. \cite[Lemma 20]{dogra2023}.
\end{proof}

By composing the $\psi _i$ with a scalar, we get an action of $(\Q _p ^\times )^n $ on $M(\mathcal{C};V_0 ,\ldots ,V_n )$. More generally, we have an action of $\prod _{i=1}^n \Aut _G (V_i )$ on $M(\mathcal{C};V_0 ,\ldots ,V_n )$. Under \eqref{eqn:iso_coh}, this corresponds to the action of $\prod _{i=1}^n \Aut _G (V_i )$ on $H^1 (G,U(V_0 ,\ldots ,V_n ))$ via the obvious map
\[
\prod _{i=1}^n \Aut _G (V_i )\to \Aut _G (U(V_0 ,\ldots ,V_n )).
\]

\subsection{Biextension structure}\label{biextensionstructure}
\begin{definition}\label{defn:mixex}
If $E_1 $ and $E_2 $ are extensions of $V_0$ by $V_1$ and $V_1$ by $V_2$ respectively, we will say that a mixed extension $M$ with graded pieces $V_0 ,V_1 ,V_2 $ such that $\pi _1 (M) =[E_1 ]$ and $\pi _2 (M)= [E_2 ]$ is a mixed extension of $[E_1 ]$ and $[E_2 ]$ (where $\pi _i $ are the obvious projection maps).
\end{definition}
If we fix $E_1$, then the set of all classes of mixed extensions in $M(V_0 ,V_1 ,V_2 )$ with $\pi _1 (M)= [E_1 ]$ is in bijection with the group $\Ext ^1 (E_1 ,V_2 )$. Similarly, if we fix $E_2 $ then the set of all mixed extensions with $\pi _2 (M)= [E_2 ]$ is in bijection with the group $\Ext ^1 (V_0 ,E_2 )$. If $M_1 $ and $M_2 $ are two mixed extensions with $\pi _i (M_1 )=\pi _i (M_2 )$ ($i=1$ or $2$) then we will denote by $M_1 +_i M_2 $ the mixed extension obtained by Baer summation in this ext group.

We furthermore have an action of $H^1 (G,V_0 ^* \otimes V_2 )$ on $M(G;V_0 ,V_1 ,V_2 )$. On the level of representations, for a cocycle $c \in Z^1 (G,V_0 ^* \otimes V_2 )$, and a mixed extension with underlying Galois representation
\[
\rho =\left( \begin{array}{ccc} \rho _{V_0 } & 0 & 0 \\ \kappa _1 & \rho _{V_1 }& 0 \\ \kappa _3 & \kappa _2 & \rho _{V_2 } \end{array} \right) ,
\]
this is given by sending $\rho $ to the representation
\[
c\cdot \rho :=\left( \begin{array}{ccc} \rho _{V_0 } & 0 & 0 \\ \kappa _1 & \rho _{V_1 }& 0 \\ \kappa _3 +c \cdot \rho _{V_0 } & \kappa _2 & \rho _{V_2 } \end{array} \right) ,
\]

Note that $M(G;V_0 ,V_1 ,V_2 )$ does not in general satisfy the axioms of a bi-extension of abelian groups in the sense of \cite{MumBi} or \cite{SGA7}, since we do not assume that the map
\[
M(V_0 ,V_1 ,V_2 )/\Ext ^1 (V_0 ,V_2 )\to \Ext ^1 (V_0 ,V_1 )\times \Ext ^1 (V_1 ,V_2 )
\]
is surjective. Indeed the possible failure of surjectivity is one of the main topics of this paper.

\subsection{Examples}
The most important examples for us will be when $\mathcal{C}$ is the category of geometric Galois representations over a local or global field of characteristic zero. That is, we are one of the following four situations:
\begin{enumerate}
\item $\mathcal{C}$ is the category of finite dimensional continuous $\Q _p$-representations of the Galois group of a finite extension of $\Q _{\ell }$, $\ell \neq p$. We will denote the corresponding categories by $\mathcal{M}_g (V_0 ,V_1 ,V_2 )$, or sometimes $\mathcal{M}_{g,v}(V_0 ,V_1 ,V_2 )$, and the sets of isomorphism classes by $M_g (V_0 ,V_1 ,V_2 )$, or sometimes $M_{g,v}(V_0 ,V_1 ,V_2 )$
\item $\mathcal{C}$ is the category of finite dimensional de Rham, or crystalline, representation of the Galois group of a finite extension of $\Q _p $. We will denote these categories by $\mathcal{M}_g (V_0 ,V_1 ,V_2 )$ and $\mathcal{M}_f (V_ 0,V_1 ,V_2 )$ respectively, and the set of isomorphism classes by $M_g (V_0 ,V_1 ,V_2 )$ and $M_f (V_ 0,V_1 ,V_2 )$ respectively
\item $\mathcal{C}$ is the category of finite dimensional $\Q _p $-representations of $G_{K,S}$ (the maximal quotient of the Galois group of a number field $K$ unramified outside a finite set of primes $S$) which are de Rham at all primes above $p$. We will denote the categories by $\mathcal{M}_g (G_{K,S} ;V_0 ,V_1 ,V_2 )$ and the isomorphism classes by $M_g (G_{K,S} ;V_0 ,V_1 ,V_2 )$.
\item $\mathcal{C}$ is the category of filtered $\phi $-modules over a finite extension of $\Q _p $. We will denote this category by $\mathcal{M}(V_0 ,V_1 ,V_2 )$ and the set of isomorphism classes by $M(V_0 ,V_1 ,V_2 )$.
\end{enumerate}
Note that in case 2, if the $V_i$ are crystalline (resp. de Rham), then $M_f (V_0 ,\ldots ,V_n )$ (resp. $M_g (V_0 ,\ldots ,V_n )$) may be identified with $H^1 _f (K,U(V_0 ,\ldots ,V_n ))$ (resp. $H^1 _g (K,U(V_0 ,\ldots ,V_n ))$), where $K$ is the underlying finite extension of $\Q _p$, and the cohomology sets $H^1 _* (K,U(V_0 ,\ldots ,V_n ))$ are in the sense of Kim \cite{kim:siegel}. The identification comes from restricting the isomorphism \eqref{eqn:iso_coh}.

Now let $X$ again be a smooth projective curve over a number field $K$, $J:=\Jac (X)$, $V:=T_p J\otimes _{\Z _p }\Q _p $ and $W$ a $\Gal (K)$-stable direct summand of $V^{\otimes 2}$.
\begin{lemma}\label{lemma:WM}
If $v|p$, then
\[
H^1 _f (K_v ,V)=H^1 _g (K_v ,V)
\]
and 
\[
H^1 _f (K_v ,V^* \otimes W)=H^1 _g (K_v ,V^* \otimes W).
\]
\end{lemma}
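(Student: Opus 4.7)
The plan is to deduce the lemma from purity: both $V$ and $V^*\otimes W$, viewed as Galois representations, are pure of weight $-1$ (in the monodromy-weight sense) at every finite place $v$ of $K$, and for such representations coming from geometry one has $H^1_f = H^1_g$. The weight count is immediate: $V \simeq H^1_{\et}(X)(1)$ is pure of weight $-1$, so $W \subset V^{\otimes 2}$ is pure of weight $-2$ and $V^*\otimes W$ is pure of weight $-1$.

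At $v \mid p$, I would invoke Bloch--Kato duality to identify $H^1_g(K_v, M)/H^1_f(K_v, M)$ with (the dual of) a subquotient of $D_{\cris}(M^*(1))^{\phi=1}$. Both $V$ and $V^*\otimes W$ are semistable at $v$ (using Grothendieck's semistable reduction theorem for $\Jac(X)$ together with closure of semistability under duals, tensor products and summands), and the twist $M^*(1)$ is in each case again pure of weight $-1$ (by Weil self-duality $V^*(1)\simeq V$, and hence $(V^*\otimes W)^*(1)\simeq V\otimes W^*(1)$). The $p$-adic Weil conjecture for semistable reductions then gives that Frobenius eigenvalues on $D_{\st}(M^*(1))$ all have absolute value $q_v^{-1/2}$, so none equals $1$ and $D_{\cris}(M^*(1))^{\phi=1} = 0$.

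At $v \nmid p$, inflation-restriction identifies $H^1_g(K_v, M)/H^1_f(K_v, M)$ with $H^1(I_{K_v}, M)^{G_{k_v}}$. I would replace $K_v$ by a finite unramified extension to make inertia act unipotently (an operation under which this quotient can only grow, so vanishing upstairs suffices), and identify the group with $\Hom_{G_{k_v}}(\Q_p(1), M/NM)$ for $N$ the nilpotent monodromy operator. By the weight-monodromy theorem (valid for $M$ as a subquotient of the \'etale cohomology of some $X^n$), the coinvariants $M/NM$ carry only $G_{k_v}$-weights $\geq -1$, while $\Q_p(1)$ has weight $-2$, so this Hom vanishes.

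The main delicate point, in my view, is the weight-monodromy bookkeeping for $V^*\otimes W$: one needs to know that tensor products and $\Gal(K)$-stable summands behave well in the monodromy-weight formalism, so that the expected weight bounds on $M/NM$ (and on $D_{\st}(M^*(1))$) persist. Given this, the rest is a standard invocation of Bloch--Kato at $p$ and inflation-restriction away from $p$.
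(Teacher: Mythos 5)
Your proof is correct and follows essentially the same route as the paper, which disposes of the lemma in one line by citing the identification of $H^1_g/H^1_f$ (at $v\mid p$) with a space of Frobenius-fixed vectors in $D_{\cris}$ and then invoking weight--monodromy for abelian varieties via Nekov\'a\v r. You have simply unpacked what that citation compresses, and added the (in this paper, also easy: it is Lemma \ref{lemma:zero}) case $v\nmid p$.

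Two small points of precision worth recording. First, the claim that ``Frobenius eigenvalues on $D_{\st}(M^*(1))$ all have absolute value $q_v^{-1/2}$'' is not quite right in the semistable non-crystalline case: the weights on $D_{\st}$ can spread over $\{-2,-1,0\}$ (for $M^*(1)\cong M$ pure of weight $-1$). What saves the argument is precisely that you only need $D_{\cris}(M^*(1))^{\phi=1}$, and $D_{\cris}=D_{\st}^{N=0}=\ker N$; by the weight--monodromy formalism $\ker N$ sits in monodromy weights $\leq -1$, so Frobenius eigenvalues there have absolute value $\leq q_v^{-1/2}$ and in particular are never $1$. So the conclusion stands, but the stated intermediate assertion should be corrected. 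Second, your ``delicate point'' about stability of the weight--monodromy package under tensor products and $\Gal$-stable summands is indeed needed, but it is standard (Deligne for $\ell\neq p$; at $p$, compatibility of the $(\phi,N)$-module with $\ell$-adic realizations for abelian varieties, or the analogous argument with $D_{\st}$), and the paper treats it as implicit in the citation of Nekov\'a\v r. Given those caveats, your argument is a faithful expansion of the paper's.
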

\begin{proof}
Recall that $H^1 _g (K_v ,V)/H^1 _f (K_v ,V)\simeq D_{\cris }(V) ^{\phi =1}$, and similarly for $V^* \otimes W$. Hence both statements are consequences of the weight monodromy conjecture for abelian varieties \cite{nekovar}.
\end{proof}
\begin{lemma}\label{lemma:zero}
If $v$ is prime to $p$, then $H^i (K_v ,V)$ and $H^i (K_v ,V^* \otimes W)=0$ are zero for all $i$, and the map
\[
H^1 _g (\Q _v ,W)\to M_{g,v}(\Q _p ,V,W)
\]
is an isomorphism.
\end{lemma}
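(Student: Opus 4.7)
The plan is to reduce both assertions to vanishing of $H^0$ on a small collection of representations, and then to deduce these vanishings from the weight--monodromy theorem. First I would reduce to showing certain $H^0$'s vanish: since $G_{K_v}$ has strict cohomological dimension $2$, one has $H^i(K_v,-)=0$ for $i\geq 3$; local Tate duality identifies $H^2(K_v,V)$ with $H^0(K_v,V^*(1))^*$ and $H^2(K_v,V^*\otimes W)$ with $H^0(K_v,V(1)\otimes W^*)^*$; and the Euler--Poincar\'e formula gives $\chi(K_v,T)=0$ for any finite-dimensional $\Q _p$-representation $T$ of $G_{K_v}$ with $v\nmid p$, so once $H^0$ and $H^2$ vanish the vanishing of $H^1$ is automatic.

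Next I would verify $H^0(K_v,T)=0$ for each of $T=V,\,V^*(1),\,V^*\otimes W$ and $V(1)\otimes W^*$. Each of these is a subquotient of a tensor product of copies of $V$ and its dual, with appropriate Tate twists, and has nonzero motivic weight: $V$ has weight $-1$, so $V^*(1)$ also has weight $-1$; the inclusion $W\subset V^{\otimes 2}$ gives $W$ motivic weight $-2$, so $V^*\otimes W$ has weight $-1$, and likewise $V(1)\otimes W^*$. By Grothendieck's weight--monodromy theorem for abelian varieties (and its compatibility with tensor products, duals and direct summands), the weights occurring on the graded pieces of the monodromy filtration of $T^{I_v}$ are all $\leq -1$, so the Frobenius eigenvalues on $T^{I_v}$ have absolute value at most $q_v^{-1/2}<1$ and in particular $1$ is never an eigenvalue. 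Hence $H^0(K_v,T)=(T^{I_v})^{\mathrm{Frob}=1}=0$.

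For the isomorphism $H^1_g(K_v,W)\to M_{g,v}(\Q _p,V,W)$, I would invoke the seven-term exact sequence associated to the central extension
\[
1\to V_0^*\otimes V_2\to U(V_0,V_1,V_2)\to V_0^*\otimes V_1\oplus V_1^*\otimes V_2\to 1
\]
with $V_0=\Q _p$, $V_1=V$, $V_2=W$, for which $V_0^*\otimes V_2=W$, $V_0^*\otimes V_1=V$ and $V_1^*\otimes V_2=V^*\otimes W$. By the first part, both the $H^0$ and $H^1$ terms involving $V$ and $V^*\otimes W$ vanish, so the flanking terms of the relevant portion are zero and the map $H^1(K_v,W)\to M_v(\Q _p,V,W)$ is forced to be an isomorphism. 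Finally, by the conventions set in the introduction $H^1_g(K_v,W)=H^1(K_v,W)$ for $v\nmid p$, and $M_{g,v}=M_v$ because there is no de Rham condition to impose away from $p$.

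The main obstacle is the purity step; once each of the four auxiliary representations is identified as pure of nonzero motivic weight via weight--monodromy, everything else is a formal consequence of local duality, Euler--Poincar\'e, and the seven-term exact sequence. Since our representations are built from the Tate module of the Jacobian, this purity is an unconditional $\ell$-adic statement (with $\ell\neq p$) supplied by Grothendieck's theorem for abelian varieties.
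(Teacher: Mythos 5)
Your proof is correct and takes essentially the same route as the paper: the paper's entire proof is a one-line appeal to weight--monodromy for abelian varieties (with pointers to \cite{nekovar} and \cite[Lemma 3.4]{QC2} for the details), and your write-up is precisely the unpacking of that appeal — purity of $V^{I_v}$, $V^*(1)^{I_v}$, $(V^*\otimes W)^{I_v}$, $(V(1)\otimes W^*)^{I_v}$ of negative weight to kill $H^0$, then Tate local duality and the Euler--Poincar\'e formula to kill $H^2$ and $H^1$, and finally the seven-term sequence attached to the central extension $1\to W\to U(\Q _p,V,W)\to V\oplus(V^*\otimes W)\to 1$ to promote the vanishing to the claimed isomorphism of pointed sets, together with the observation that $H^1_g=H^1$ and $M_g=M$ at primes away from $p$. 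No gaps.
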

\begin{proof}
This is also a consequence of the weight monodromy conjecture for abelian varieties (see \cite{nekovar} or \cite[Lemma 3.4]{QC2}).
\end{proof}

\begin{lemma}\label{lemma:SES}
In cases (1) and (2) above, the exact sequence \eqref{eqn:ES} extends to a short exact sequence of pointed sets
\[
1\to H^1 _g (K _v ,W)\to M_g (\Q _p ,V,W)\to H^1 _g (K_v ,V)\times H^1 _g (K_v ,V^* \otimes W) \to 1.
\]
\end{lemma}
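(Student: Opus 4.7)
The plan is to verify three exactness conditions for a short exact sequence of pointed sets, building on the seven-term sequence \eqref{eqn:ES}. For exactness at the left and middle, I would identify the preimage of $(0,0)$ in the ambient set $M$ by writing a representative mixed extension in matrix form
\[
\rho_c = \begin{pmatrix} 1 & 0 & 0 \\ 0 & \rho_V & 0 \\ c & 0 & \rho_W \end{pmatrix}
\]
for $c \in Z^1(K_v, W)$. Since the $(3,2)$-entry vanishes, $\rho_c$ is de Rham if and only if $[c]$ lies in $H^1_g(K_v, W)$. This simultaneously yields the injectivity of $H^1_g(K_v, W) \hookrightarrow M_g$ and identifies its image with the fibre of $M_g \to H^1_g(K_v, V) \times H^1_g(K_v, V^*\otimes W)$ over the trivial pair.

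For surjectivity, the case $\ell \neq p$ is immediate from Lemma \ref{lemma:zero}, which renders the target trivial. When $\ell = p$, the obstruction in \eqref{eqn:ES} to lifting $(\alpha,\beta)$ to an element of $M$ is the cup product $\alpha \cup \beta \in H^2(K_v, W)$, which I would prove vanishes via Bloch--Kato local duality. Under Tate duality $H^2(K_v, W) \cong H^0(K_v, W^*(1))^\vee$, it suffices to show $\langle \alpha \cup \beta, x \rangle = 0$ for every $x \in H^0(K_v, W^*(1))$. Associativity rewrites this as the Tate pairing $\langle \alpha, \beta \cup x \rangle$ with $\beta \cup x \in H^1(K_v, V^*(1))$ obtained by contracting $W \otimes W^*(1) \to \Q_p(1)$. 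Since $x$ is Galois-invariant and $\beta$ lies in $H^1_f(K_v, V^* \otimes W)$ by Lemma \ref{lemma:WM}, the class $\beta \cup x$ lies in $H^1_f(K_v, V^*(1))$: its image in $H^1(V^*(1) \otimes B_{\cris})$ vanishes because that of $\beta$ does. Self-annihilation of $H^1_f$ under Bloch--Kato local duality then forces the pairing to vanish.

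Having killed the obstruction, \eqref{eqn:ES} supplies a lift $M_0 \in M$, which I upgrade to a de Rham lift at the cochain level. With de Rham cocycle representatives $\tilde\alpha, \tilde\beta$ for $\alpha, \beta$ and a primitive $\gamma \in C^1(K_v, W)$ for $\tilde\alpha \cup \tilde\beta$, the triple $(\tilde\alpha, \tilde\beta, \gamma)$ defines a lift; shifting $\gamma$ by any cocycle $\eta \in Z^1(K_v, W)$ leaves $d(\gamma+\eta) = \tilde\alpha \cup \tilde\beta$ intact, so the class $[\gamma]$ can be moved arbitrarily in $H^1(K_v, W)$ and in particular placed in $H^1_g(K_v, W)$, giving a lift whose off-diagonal extension classes are all de Rham and hence in $M_g$. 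The main technical step is the cup-product vanishing via Bloch--Kato local duality combined with Lemma \ref{lemma:WM}; the de Rham upgrade of the lift is a matter of cochain adjustment.
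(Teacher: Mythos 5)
Your approach is genuinely different from the paper's. The paper establishes surjectivity by building a commutative diagram with four exact rows (Galois cohomology, crystalline mixed extensions, filtered $\phi$-modules, and $D_{\dR}$-level linear algebra), with vertical isomorphisms, and observes that the bottom row is manifestly a short exact sequence; the exactness then propagates upward, and one passes from $H^1_f$ to $H^1_g$ using Lemma~\ref{lemma:WM}. Your cup-product vanishing argument via local Tate duality and self-annihilation of $H^1_f$ is correct and arguably a cleaner way to see \emph{why} the obstruction in \eqref{eqn:ES} dies: it isolates what Lemma~\ref{lemma:WM} (i.e., weight--monodromy) is really being used for. Your treatment of exactness at the left and middle is also fine.

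However, the final step — upgrading a lift in $M$ to a lift in $M_g$ — has a genuine gap. Once the obstruction vanishes, the seven-term sequence gives a lift in $M = H^1(K_v, U(\Q_p, V, W))$, and the full fibre over $(\alpha,\beta)$ is a torsor under $H^1(K_v, W)$. You then argue that by shifting the cochain $\gamma$ by a cocycle $\eta$ one can ``place $[\gamma]$ in $H^1_g(K_v, W)$.'' But $\gamma$ is a $1$-cochain, not a cocycle (its differential is $\tilde\alpha \cup \tilde\beta$), so $[\gamma]$ is not a cohomology class; there is no canonical trivialisation of the $H^1(K_v, W)$-torsor of lifts under which ``being in $H^1_g$'' makes sense. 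The concluding clause ``a lift whose off-diagonal extension classes are all de Rham and hence in $M_g$'' begs the question: a mixed extension does not have a well-defined third extension class in $H^1(K_v, W)$ when $\alpha$ and $\beta$ are nontrivial, and the de Rham condition on $M$ is equivalent to the class of $0\to W\to M\to \pi_1(M)\to 0$ lying in $H^1_g(K_v, \pi_1(M)^* \otimes W)$, which is a statement about a different coefficient module. Equivalently, what you need is that some element of the fibre is de Rham, and the shift argument only shows that \emph{if} one lift is de Rham then the de Rham lifts form an $H^1_g(K_v, W)$-subtorsor (which is true but doesn't establish non-emptiness). This non-emptiness is exactly what the paper's $D_{\dR}$-level diagram supplies: a de Rham lift is constructed directly from filtered linear-algebra data. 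To repair your argument you would need, e.g., to show that $H^1_g(K_v, \pi_1(M)^* \otimes W) \twoheadrightarrow H^1_g(K_v, V^* \otimes W)$ for the specific de Rham extension $\pi_1(M)$, which amounts to an $H^1_g$-dimension count or an explicit $B_{\dR}$-splitting — i.e., you would end up reproving a piece of the paper's diagram argument.
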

\begin{proof}
In the case where $v$ is prime to $p$ this follows from Lemma \ref{lemma:zero}.

In the case where $v$ divides $p$, this is proved in \cite{QC2} in the case of crystalline representations. We recall the argument: one shows that there is a commutative diagram with exact rows whose vertical maps are all isomorphisms
\[
\begin{small}
\begin{tikzcd}
\ext ^1 _f (\Q _p ,W) \arrow[r] \arrow[d] & M_f (\Q _p ,V,W) \arrow[r] \arrow[d] & \ext ^1 _f (\Q _p ,V)\times \ext ^1 _f (V, W) \arrow[d] \\
\ext ^1  (\Q _p ,D(W)) \arrow[r] \arrow[d] &  M (\Q _p ,D(V),D(W)) \arrow[r] \arrow[d] & \ext ^1  (\Q _p ,D(V))\times \ext ^1 (D(V), D(W)) \arrow[d] \\
D_{\dR}(W)/F^0 \arrow[r]           & U(K_v ,D_{\dR}(V),D_{\dR}(W))/F^0 \arrow[r]           & D_{\dR}(V)/F^0 \times D_{\dR}(V^* \otimes W)/F^0          
\end{tikzcd}
\end{small}
\]
where $D(.):=D_{\cris }(.)$, the ext groups and mixed extensions in the second row are in the category of filtered $\phi $-modules over $K_v$, the middle vertical maps are given by the $D_{\cris }$ functors, and the bottom vertical maps are given by Kim's classification of unipotent torsors in the category of filtered $\phi $-modules (see \cite[\S 3.3]{QC2}). Since the bottom extends to a short exact sequence, so does the top.

If we replace crystalline by de Rham we still get surjectivity of $M_g (\Q _p ,V,W) \to H^1 _f (K_v ,V)\times H^1 _f (K_v ,V^* \otimes W) $, since both $V$ and $V^* \otimes W$ have the property that $H^1 _f =H^1 _g $ by Lemma \ref{lemma:WM}. On the other hand injectivity of $H^1 _g (K_v ,W)\to H^1 (K_v ,U(\Q _p ,V,W))$ is a consequence of the Weil conjectures/weight monodromy for abelian varieties (i.e. the fact that $H^0 (K_v ,V)=H^0 (K_v ,V^* \otimes W)=0$).
\end{proof}

\subsection{$p$-adic heights}
The $p$-adic height pairing, for an abelian variety $A$ over a number field $K$, can be described as a function
\[
h:A(K)\times A(K)\to \Q _p
\]
depending on a splitting of the Hodge filtration of $H^1 _{\dR}(A/K_{\mathfrak{p}})$ for every prime above $p$, and depending linearly on an idele class character, which may be thought of as a functional
\[
\oplus H^1 _g (K_v ,\Q _p (1))/H^1 _g (K ,\Q _p (1))\to \Q _p .
\]
As the dependence on the functional is linear, it is convenient to take the $p$-adic height (now only depending on the splittings) to simply be a map
\[
h:A(K)\times A(K)\to \oplus H^1 _g (K_v ,\Q _p (1))/H^1 _g (K ,\Q _p (1)) .
\]
In Nekov\'a\v r's construction \cite{nekovar}, this is in fact induced by a map
\[
H^1 _f (K ,V)\times H^1 _f (K ,V) \to \oplus  H^1 _g (K_v ,\Q _p (1))/H^1 _g (K ,\Q _p (1)) .
\]
which we will also call $h$. The original $h$ is then just the composite of the new $h$ with a product of Kummer maps 
\[
\kappa :A(K)\to H^1 _f (K ,V).
\] 

Nekov\'a\v r's height function on Bloch--Kato Selmer groups arises as a sum of local height functions on nonabelian cohomology varieties (although the original construction is not phrased in that way). Namely, for each prime $v$ we define a function
\[
h_v :M_g (K_v ;\Q _p ,V,\Q _p (1))\to H^1 _g (K _v ,\Q _p (1))
\]
(a more general construction will be described later) with the following two properties with respect to the biextension structure of mixed extensions:
\begin{itemize}
\item $h_v (M+c)=h_v (M)+c$ for all $c\in H^1 _g (K _v ,\Q _p (1))$.
\item If $M_1 $ and $M_2 $ have the same image in $H^1 _g (\Q _p ,V)$ under $\pi _{i*}$ (for $i=1$ or $2$), then $h_v (M_1 +_i M_2 )=h_v (M_1 )+h_v (M_2 )$.
\end{itemize}
Then define the global height of a mixed extension $M$ to be the image of $(h_v (M))_v $ in $\oplus H^1 _g (K _v ,\Q _p (1))/H^1 _g (K ,\Q _p (1))$. By the exact sequence of pointed sets
\[
H^1 _g (K ,\Q _p (1))\to M_g (K;\Q _p ,V,\Q _p (1))\to H^1 _g (K ,V)\times H^1 _g (K,V)\to 1,
\] 
the first property implies that the global height defines a function
\[
H^1 _g (K,V)\times H^1 _g (K ,V)\to \oplus H^1 _g (K _v ,\Q _p (1))/H^1 _g (K ,\Q _p (1)),
\]
($H^1 _g (K,\Q _p (1))$ acts transitively on the fibres of $M_g (K;\Q _p ,V,\Q _p (1))\to H^1 _g (K ,V)\times H^1 _g (K,V)$) 
and the second property implies it is bilinear.
\subsection{Generalised heights}
In \cite{QC2}, it was observed that this construction admits a natural generalisation, relevant to quadratic Chabauty. Let $W$ be a Galois stable quotient \footnote{later, we will also consider general quotients of $V^{\otimes 2}$. By Lemma \ref{lemma:sym_boring}, the difference is not significant for nonabelian Chabauty.} of $\Ker (\wedge ^2 H^1 _{\et }(X,\Q _p ) \stackrel{\cup }{\longrightarrow } H^2 _{\et }(X,\Q _p ))^* $. The local generalised height is a function
\[
h_v :M_{g,v}(\Q _p ,V,W)\to H^1 _g (\Q _v ,W)
\]
at each prime $v$ of $\Q $. It can be defined by the following properties:
\begin{enumerate}
\item It is equivariant with respect to the natural action of $(\Q _p ^\times )^3 \subset \Aut (\Q _p )\times \Aut (V)\times  \Aut (W)$ on both sides.
\item It is equivariant with respect to the natural action of $H^1 _g (\Q _v ,W)$, i.e.
\[
h_v (M+c)=c+h_v (M)
\]
for all $M\in M_{g,v}(\Q _p ,V,W)$ and $c\in H^1 _g (\Q _v ,W)$.
\end{enumerate}
Since $M_g (\Q _v ;\Q _p ,V,W)$ is an $H^1 _g (\Q _v ,W)$-torsor over $H^1 _g (\Q _v ,V)\times H^1 _g (\Q _v ,V^* \otimes W)$, we can think of a local generalised height as giving a trivialisation of this torsor. The difference of two choices of local generalised height defines a function
\begin{equation}\label{eqn:alg}
h_v -h_v ' :H^1 _g (K _v ,V)\times H^1 _g (K _v ,V^* \otimes W)\to H^1 _g (K _v ,W)
\end{equation}
By Lemma \ref{lemma:zero}, this implies that the local generalised height function is unique when $v$ is prime to $p$ (and indeed is just given by the isomorphism defined in that Lemma). In the case where $v|p$ the choice is nontrivial. In fact we can pin it down more, using the condition of equivariance with respect to $(\Q _p ^\times )^3 $. If we take the difference of \textit{algebraic} local generalised heights (with respect to the scheme structure on $M _g (K_v ;\Q _p ,V,W)$ induced by the identification with $U(\Q _p ,D_{\dR}(V),D_{\dR}(W))/F^0 $) then $h_v -h'_v$ in fact comes from an element of the vector space
\[
H^1 _g (K _v ,V)^* \otimes H^1 _g (K _v ,V^* \otimes W)^* \otimes H^1 _g (K _v ,W).
\]
In \cite{QC2}, a recipe is given for constructing a local height at a prime $v$ above $p$ depending on a choice of a splitting of the Hodge filtration on $H^1 _{\dR}(X_{K_v }/K_v )$. For the purposes of this paper, the choice of local generalised heights will not be important.

Given a finite set of primes $S$ containing all primes above $p$ and primes of bad reduction, and a tuple $(h_v )$ of local generalised heights at each prime $v$ in $S$, we define the global generalised height $h$ to be the function
\begin{align*}
h: H^1 _{f,S} (K,U(\Q _p ,V,W))\to (\oplus _{v\in S}H^1 _g (K_v ,W))/\loc (H^1 _{f,S}(K,W)) \\
M\mapsto (h_v (\loc _v (M))) \mod \loc H^1 _{f,S} (K,W).
\end{align*}
Here $H^1 _{f,S} (K,U(\Q _p ,V,W)) \subset H^1 (K,U(\Q _p ,V,W))$ is the subset of equivalence classes of torsors whose image in $H^1 (K_v ,U(\Q _p ,V,W))$ is in $H^1 _g (K_v ,U(\Q _p ,V,W))$ for $v$ in $S$, and in $H^1 _f (K_v ,U(\Q _p ,V,W))$ for all other $v$. Under the identification \eqref{eqn:iso_coh}, this may also be identified with equivalence classes of mixed extensions which are de Rham and unramified at all primes outside $S$.
When it is prudent to emphasise the choice of $W$, we will write this function as $h_W$.

By construction, $h$ factors through the quotient $H^1 _g (G_{K,S},U(\Q _p ,V,W))/H^1 _g (G_{K,S},W)$, and hence can be viewed as being defined on the image of $H^1 _{f,S}(K,U(\Q _p ,V,W))$ in $H^1 _f (K,V)\times H^1 _f (K,V^* \otimes W)$. Recall from Lemma \ref{lemma:WM} that $H^1 _f (K,V)=H^1 _g (K,V)$, and the same for $V^* \otimes W$.

It follows from the definitions that the map $h$ is bilinear in the following sense \cite[Lemma 3.8]{QC2}. Extend $h$ to $\Q _p [M_g (\Q _p ,V,W)]$ linearly, i.e.
\[
h(\sum \lambda _i [M_i ]):=\sum \lambda _i h(M_i ).
\]
Let $b$ denote the map
\begin{align*}
& \Q _p [M_g (\Q _p ,V,W)]\to H^1 _g (K,V)\otimes H^1 _g (K,V^* \otimes W) \\
& \sum \lambda _i [M_i ]\mapsto \sum \lambda _i \pi _{1 *}(M_i )\otimes \pi _{2*}(M_i ).
\end{align*}
Then 
\[
\Ker (h)\supset \Ker (b).
\]
For $h$ to be defined on the whole of $H^1 _g (K,V)\otimes H^1 _g (K,V^* \otimes W)$, it is enough for $b$ to be surjective.
Standard conjectures in arithmetic geometry imply that $h$ is in fact a pairing. One such conjecture is the following part of the Bloch--Kato conjectures.
\begin{conjecture}[Bloch--Kato, \cite{BK}, Conjecture 5.3 (i)]\label{conj:BK1}
Let $Z$ be a smooth proper scheme over $K$, and $r,m\in \Z $. Then the \'etale regulator
\[
K^{(r)}_{2r-1-m}(Z)\otimes \Q _p \simeq H^1 _g (K,H^m _{\et }(Z_{\overline{K}},\Q _p (r))).
\]
\end{conjecture}
\begin{lemma}
Conjecture \ref{conj:BK1} implies that 
\begin{equation}\label{eqn:surj}
(\pi _{1 *},\pi _{2*}):\Q _p [M_g (\Q _p  ,V,W)]\to H^1 _g (K  ,V)\times H^1 _g (K ,V^* \otimes W)
\end{equation}
is surjective.
\end{lemma}
\begin{proof}
Applying this conjecture with $Z=X^2 $, $m=2$ and $r=1$ gives that $H^1 _g  (K,H^2 _{\et }(Z_{\overline{K}},\Q _p (1)))=0$ (since negative $K$-groups are zero). Since $\wedge ^2 V(-1)$ is a direct summand of $H^2 _{\et }(Z_{\overline{K}},\Q _p (1))$, we deduce that $H^1 _g  (K,V^{\otimes 2}(-1))=0$. By Faltings' semisimplicity theorem, $W$ is a direct summand of $V^{\otimes 2}$, and hence $H^1 _g  (K,W(-1))=0$. In particular, for any finite set of primes $T$, the localisation map
\[
H^1 (G_{K,T},W^* (1))\to \oplus _{v\in T}H^1 (G_{K_v },W^* (1))
\]
is injective. By Poitou--Tate duality this implies that the localisation map
\begin{equation}\label{eqn:inj}
H^2 (G_{K,T},W)\to \oplus _{v\in T}H^2 (G_{K_v },W)
\end{equation}
is injective. Surjectivity of \eqref{eqn:surj} then follows from local surjectivity, which is proved in Lemma \ref{lemma:SES}. We have already seen that, for all primes $v$, the map
\[
M_g (K_v ;\Q _p ,V,W)\to H^1 _g (K_v ,V)\times H^1 _g (K_v ,V^* \otimes W),
\]
which by exactness implies that the boundary map
\[
H^1 _g (K_v ,V)\times H^1 _g (K_v ,V^* \otimes W)\to H^2 (K_v ,W)
\]
is zero. For any finite set $T$ of primes containing primes above $p$ and primes of bad reduction, we have a commutative diagram with exact rows
\begin{tikzcd}
M_{f,S}(K;\Q _p ,V,W) \arrow[d] \arrow[r] & H^1 _f (K,V)\times H^1 _f (K,V^* \otimes W) \arrow[d] \arrow[r] & H^2 (K,W) \arrow[d] \\
\prod _{v\in T}M_{g}(K_v ;\Q _p ,V,W) \arrow[r] & H^1 _f (K_v ,V)\times H^1 _f (K_v ,V^* \otimes W) \arrow[r] & \prod _{v\in T} H^2 (K_v ,W) \\
\end{tikzcd} 
To show that an element of $ H^1 _f (K,V)\times H^1 _f (K,V^* \otimes W)$ lifts to $M_{f,S}(K;\Q _p ,V,W)$ is the same as showing that its image in $H^2 (K,W)$ is zero. By \eqref{eqn:inj}, this is the same as showing that it is zero at each prime of $T$, which follows from the fact that it lifts at each prime of $T$. 
\end{proof}
Hence conjecturally the generalised height in fact defines a pairing
\[
H^1 _g (K ,V)\times H^1 _g (K ,V^* \otimes W)\to \oplus _v H^1 _g (K _v ,W)/\loc H^1 _g (K ,W).
\]
If $R$ is a subring of $\End _{G_K }(V)$, then one can define the notion of an $R$-equivariant local generalised height, and an $R$-equivariant generalised height (see \cite[\S 4.1]{QC2}). Recall that this corresponds to generalised height pairings defined by $R$-equivariant splittings of the Hodge filtration on $V$. If $s$ is an $R$-equivariant splitting, $M$ is a mixed extension in $M_g (\Q _p ,V,W)$, and $\gamma $ is an element of $R^\times $, then
\[
h(M)=h(\gamma ^* M),
\]
where $\gamma ^* M $ is the mixed extension obtained by composing the isomorphism $M_1 /M_2 \simeq V$ with $\gamma $. Viewed as a pairing on $H^1 _f (K,V)\times H^1 _f (V^* \otimes W)$, we deduce that whenever $h(E_1 ,E_2 )$ is defined, we have
\[
h(E_1 ,E_2 )=h (\gamma ^{-1}E_1 ,\gamma E_2 ).
\]
Let $G:=R^\times $. Then we deduce that $h$ factors through the $G$-coinvariants of $H^1 _f (K,V)\otimes _{\Q _p } H^1 _f (K,V^* \otimes W)$, i.e. through the map
\[
H^1 _f (K,V)\otimes _{\Q _p } H^1 _f (K,V^* \otimes W) \to (H^1 _f (K,V)\otimes _{\Q _p } H^1 _f (K,V^* \otimes W)) \otimes _{\Q _p [G]}\Q _p .
\]
We have 
\[
H^1 _f (K,V)\otimes _{\Q _p } H^1 _f (K,V^* \otimes W) \otimes _{\Q _p [G]}\Q _p \simeq H^1 _f (K,V)\otimes _{\Q _p [G] } H^1 _f (K,V^* \otimes W) 
\]
(see e.g. \cite[equation (3.2)]{atiyah}). Since $R$ is a finite $\Q _p $ algebra the natural map $\Q _p [G] \to R$ is surjective (see e.g. \cite[Unit theorem]{lenstra}). Hence
\footnote{Here and it what follows, when we say that the generalised height factors through $H^1 _f (K,V)\otimes _R H^1 _f (K,V^* \otimes W)$ we do not mean that it is necessarily defined on the whole of $H^1 _f (K,V)\otimes _R H^1 _f (K,V^* \otimes W)$. We mean that it factors through the projection from its domain of definition $S\subset H^1 _f (K,V)\otimes _{\Q _p } H^1 _f (K,V^* \otimes W)$ to its image in $H^1 _f (K,V)\otimes _R H^1 _f (K,V^* \otimes W)$}
\[
H^1 _f (K,V)\otimes _{\Q _p [G] } H^1 _f (K,V^* \otimes W) \simeq H^1 _f (K,V)\otimes _{R} H^1 _f (K,V^* \otimes W) .
\]
As is shown in loc. cit., a generalised height will be $R$-equivariant if it is constructed with respect to $R$-equivariant splittings of the Hodge filtration. A particularly interesting case is that of mixed extensions with graded pieces $\Q _p ,V^{\oplus n},W$, for $W$ some quotient of $V^{\otimes 2}$. Then we have an obvious inclusion of $\Mat _n (\Q _p )$ in $\End _{G_K }(V^{\oplus n})$, and an isomorphism
\[
H^1 _f (K,V^{\oplus n})\otimes _{\Mat _n (\Q _p )}H^1 _f (K,(V^{\oplus n})^* \otimes W)\simeq H^1 _f (K,V)\otimes _{\Q _p }H^1 _f (K,V^* \otimes W).
\]
If $c_i$ and $d_i$ are elements of $H^1 _f (\Q ,V)$ and $H^1 _f (\Q ,V^* \otimes W)$ respectively, and $M\in M(\Q _p ,V^{\oplus n},W)$ is a mixed extension of $(c_1 ,\ldots ,c_n )$ and $(d_1 ,\ldots ,d_n )$ in the sense of Definition \ref{defn:mixex}, then we will write $h(\sum c_i \otimes d_i )$ for the $\Mat _n (\Q _p )$-equivariant height of $M$. We will say that the height pairing of an element $v$ of $H^1 _f (K,V)\otimes H^1 _f (K,V^* \otimes W)$ is defined if there is an $n$ such that $v$ is in the image of $H^1 _g (K,U(\Q _p ,V^{\oplus n},W))$.
\begin{example}
Given exact sequences of crystalline $G_K$-representations
\begin{align*}
0\to V\to E_1 \to \Q _p \to 0 \\
0\to V \to E_2 \to \Q _p \to 0 ,
\end{align*}
the tensor product $E_1 \otimes E_2 $ is naturally an object in $\mathcal{M}(\Q _p ,V^{\oplus 2},V^{\otimes 2})$. Note that $E_1 \otimes E_2 $ is a mixed extension of $(E_1 ,E_2 )$ by $(\iota _1 (E_2 ),\iota _2 (E_1 ))$, where $\iota _i $ are the two natural inclusions of $V$ into $\Hom (V,V^{\otimes 2})$. It follows that the generalised height of $E_1 \otimes \iota _1 (E_2 )+E_2 \otimes \iota _2  (E_1 )$ is always defined. Here $\iota _1 $ and $\iota _2$ denote the pushforward maps $H^1 _f (K,V)\to H^1 _f (K,V^* \otimes V^{\otimes 2})$ corresponding to left and right multiplication.
\end{example}
\begin{remark}
Without assuming the Bloch--Kato conjectures, the generalised height pairing is defined on the kernel of the cup product map
\[
\cup :H^1 _g (K ,V)\times H^1 _g (K ,V^* \otimes W)\to H^2 (K ,W).
\]
Moreover this map lands in the kernel of 
\[
\loc :H^2 (K ,W)\to \oplus _v H^2 (K _v  ,W).
\] 
On the other hand let $H^2 _c (G_{K,S} ,W)$ denote compactly supported Galois cohomology, i.e. the second cohomology of the (shifted) cone $C^\bullet _c (G_{K ,S},W)$ of
\[
R\Gamma (G_{K ,S},W)\to \oplus _{v\in S}R\Gamma (G_{K _v },W).
\]
Then we have a short exact sequence
\[
0\to \frac{\oplus _{v\in S}H^1 (K _v ,W)}{\loc H^1 (G_{K,S} ,W)}\to H^2 _c (G_{K,S} ,W)\to \Ker (H^2 (G_{K,S} ,W)\to \oplus _{v\in S} H^2 (K _v ,W))\to 0.
\]
Hence it would be natural to have a generalised height pairing with values in $H^2 _c (G_{K,S} ,W)$. 
\end{remark}
\subsection{The mixed extensions associated to fundamental groups}
We first recall a few properties of the mixed extensions associated to fundamental groups (see \cite[\S 3]{DG}). Let $X$ be a smooth geometrically connected variety over a field $K$ of characteristic zero and $b$ and $z$ points in $X(K)$. Let $E_n (X,b)$ be the $\Q _p $-\'etale sheaf on $X_{\overline{K}}$ characterised by either of the equivalent properties:
\begin{itemize}
\item It corresponds to the $\pi _1 (X_{\overline{K}},b)$-module given by the maximal $n$-nilpotent quotient of the universal enveloping algebra of the Lie algebra of the $\Q _p $-Malcev completion of $\pi ^{\et }_1 (X_{\overline{K}},b)$.
\item $E_n (X,b)$, together with a distinguished point $1$ in the fibre at $b$, is universal for pointed $n$-unipotent $\Q _p $-\'etale local systems on $X_{\overline{K}}$, i.e. for every finite dimensional $\Q _p $-representation $M$ of $\pi _1 ^{\et }(X_{\overline{K}},b)$ admitting a $\pi _1 $-stable filtration
\[
M=M_0 \supset M_1 \supset \ldots \supset M_n =0
\]
such that $\pi _1 $ acts trivially on $\gr (M)$, and every $m\in M$, there is a unique morphism of $\pi _1 $-representations $b^* E_n (X,b)\to M$ sending $1$ to $m$.
\end{itemize}
Note that the $\pi _1 (X_{\overline{K}},b)$-module corresponding to $E_n (X,b)$, thought of as a $\pi _1 (X(\mathbb{C}),b)$-module via an embedding $\overline{K} \to \mathbb{C}$, is simply 
\[
\Z [\pi _1 (X(\mathbb{C}),b)]/I^{n+1} \otimes _{\Z }\Q _{p },
\]
where $I\subset \Z [\pi _1 (X(\mathbb{C}),b)]$ is the kernel of the map
\begin{align*}
\Z [\pi _1 (X(\mathbb{C}),b)]\to \Z \\
\gamma \mapsto 1.
\end{align*}
Let $E_n (X;b,z)$ denote the fibre of $E_n (X,b)$ at $z$. By the above, an element of $E_n (X;b,z)$ can be thought of as a $\Q _{p }$-linear combination of homotopy classes of paths from $b$ to $z$ (although this perspective will not be used in what follows). $E_n (X;b,z)$ inherits an action of the Galois group of $K$. By the above, $E_n (X,b)$ admits a filtration corresponding to the filtration by powers $I^k$ of the augmentation ideal on the universal enveloping algebra. We denote by $I^k E_n (X;b,z)$ the fibre at $z$ of the local system $I^k E_n (X,b)$. Where there is no possibility of confusion we write the fibre at $b$ as simply $E_n (X,b), I^k /I^j (b)$, etc. When $n=2$, we will drop the subscript $n$ and denote this simply as $E(X,b),E(X;b,z)$, etc. Finally, when there is no risk of ambiguity over the choice of $X$ we will write this simply as $E(b), E(b,z)$, etc.

In this paper we will be interested in this construction when $n=2$. In thie case, the graded pieces of $E_2 (X;b,z)$ are $\Q _p ,V$ and $V^{\otimes 2}/\Q _p (1)$ (see e.g. \cite[\S 3.3]{QC2}). First we recall Beilinson's cohomological description of $E(X;b,z)$.
\begin{theorem}[\cite{DG}, Proposition 3.4]
We have an isomorphism of $\Q _p$-representations of $\Gal (K)$
\[
E(X;b,z)\simeq H^2 _{\et }(X^2 ;A\cup B\cup \Delta ,\Q _p )^* ,
\]
where $A=\{ b\} \times X$ and $B=X\times \{ z\}$.
\end{theorem}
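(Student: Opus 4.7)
The plan is to realise $H^2_{\et}(X^2;A\cup B\cup\Delta,\Q_p)^*$ as the fibre over $z$ of a $2$-unipotent $\Q_p$-\'etale local system on $X_{\overline{K}}$ with a canonical basepoint at $z=b$, and then invoke the universal property of $E(X,b)$ recalled in the excerpt. The proof then splits into computing the graded pieces of both sides, constructing a comparison map, and verifying that this map is an isomorphism, with all difficulty concentrated in the depth-$2$ piece.

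First, I would compute the right-hand side. Writing $Z:=A\cup B\cup\Delta$, and assuming $b\neq z$, the subvariety $Z$ is the union of three smooth copies of $X$ meeting transversally in the three distinct points $(b,z)$, $(b,b)$, $(z,z)$. Combining the long exact sequence of the pair $(X^2,Z)$ with Mayer--Vietoris for $Z$ yields a short exact sequence
\[
0\to \Coker\bigl(H^1(X^2)\to H^1(Z)\bigr)\to H^2(X^2,Z,\Q_p)\to \Ker\bigl(H^2(X^2)\to H^2(Z)\bigr)\to 0,
\]
where the K\"unneth decomposition makes $H^1(X^2)\to H^1(Z)$ injective and $H^2(X^2)\to H^2(Z)$ surjective (the latter using surjectivity of $\cup:H^1(X)^{\otimes 2}\to H^2(X)$ for a curve). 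One identifies the cokernel on the left as an extension of $H^1(X,\Q_p)$ by $\Q_p$ (the $\Q_p$ produced by the Mayer--Vietoris connecting map at the three nodes of $Z$), and the kernel on the right with $\Ker(\cup:H^1(X,\Q_p)^{\otimes 2}\to H^2(X,\Q_p))$. Dualising produces a three-step filtration on $H^2(X^2;Z,\Q_p)^*$ with graded pieces $\Q_p$, $V$ and $\Ker(\cup)^*(1)$, matching the graded pieces $I^k/I^{k+1}$ of $E(X;b,z)$ by the standard description of the depth-two unipotent completion of a surface group.

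Next I would construct the comparison map. Varying $z$, the assignment $z\mapsto H^2_{\et}(X^2;A\cup B\cup\Delta,\Q_p)^*$ defines an \'etale sheaf $\mathcal{L}$ on $X_{\overline{K}}$, realised as the dual of a suitable $R^2(p_2)_*$ of a sheaf with supports; constancy of the graded pieces from the previous step, combined with proper base change, shows that $\mathcal{L}$ is a $2$-unipotent $\Q_p$-local system. A canonical basepoint $1\in\mathcal{L}_b$ is supplied by the cycle class of the intersection $(b,b)\in A\cap\Delta$, interpreted via a tangential basepoint to handle the degeneration $z\to b$. The universal property of $(E(X,b),1)$ for pointed $2$-unipotent local systems then produces a unique morphism $E(X,b)\to\mathcal{L}$ of pointed local systems.

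\textbf{The main obstacle} is verifying that this morphism is an isomorphism on the depth-$2$ graded piece of the filtration; on the degree $0$ and $1$ pieces this is immediate from the constant-sheaf and cycle-class descriptions. Concretely, one must show that the extension class of $H^2(X^2;Z,\Q_p)^*$ in $\Ext^1(V,\Ker(\cup)^*(1))$ coincides with the Massey-type class recorded by the $I^2/I^3$ piece of $E(X;b,z)$. Following Beilinson, I would check this by an explicit residue computation along the diagonal $\Delta\subset X^2$: the diagonal encodes the concatenation of paths on $X$, so that the boundary of a $1$-chain along $\Delta$ realises the commutator product in $\pi_1$. Combined with the cup-product description of the boundary map for central extensions (the lemma above), this identifies the Yoneda extension class of $H^2(X^2;Z,\Q_p)^*$ with the one defining $E(X;b,z)$, completing the comparison.
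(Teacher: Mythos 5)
The paper does not prove this statement: it is recalled verbatim as a known result, citing \cite{DG}, Proposition 3.4 (Beilinson's cohomological description of the truncated group ring of $\pi_1$). So there is no in-paper proof to compare your attempt against, and I will assess the sketch on its own terms.

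Your graded-piece computation is correct: writing $Z=A\cup B\cup\Delta$ with three pairwise intersection points (for $b\neq z$), the normalisation sequence gives $0\to\Q_p\to H^1(Z)\to H^1(X)^{\oplus3}\to 0$, $(a,b)\mapsto(b,a,a+b)$ is injective on $H^1(X^2)\to H^1(Z)$, $H^2(X^2)\to H^2(Z)\simeq\Q_p(-1)^{\oplus3}$ is surjective with kernel $\Ker(\cup)$, and dualising gives graded pieces $\Q_p$, $V$, $V^{\otimes2}/\Q_p(1)$, with the right total dimension $2g+4g^2$. (A small slip: $\Ker(\cup)^*$ is already $V^{\otimes2}/\Q_p(1)$; there is no further Tate twist $(1)$.) The universal-property framing is also the right strategy, and is essentially what \cite{DG} does. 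However two points in the sketch are flagged but not actually carried out, and both are where the real content lies. First, the degeneration at $z=b$ is genuine, not cosmetic: when $z=b$ the three branches of $Z$ pass through a single triple point, the Mayer--Vietoris $\Q_p$ disappears and $\dim H^1(Z)$ drops from $1+6g$ to $6g$, so the naive sheaf $\mathcal{L}$ fails to be locally constant at $b$; invoking a tangential basepoint is the right instinct but one must either set up nearby-cycles/specialisation carefully, or construct the comparison map pointwise for $z\neq b$ rather than via a pointed local system on all of $X$. Second, showing the universal map $E(X,b)\to\mathcal{L}$ is an isomorphism is not just a matter of constancy of graded pieces: since $E_1(X,b)$ is universal, the map on $\gr^1$ is an isomorphism only if the extension $\mathcal{L}/W_{-2}$ of $\Q_p$ by $V$ is the canonical one (corresponding to $\mathrm{id}\in\End(V)$ under $H^1(X,V)\simeq V^*\otimes V$), and similarly one must identify the $\gr^2$ extension with the universal Massey/cup-product class. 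The ``residue along the diagonal'' heuristic is pointing at the right computation but is not a substitute for it; to turn the sketch into a proof you should check these two extension-class identifications, or alternatively follow the inductive Gysin-sequence/niveau argument of \cite{DG} which avoids the pointwise degeneration issue.
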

Hain and Matsumoto \cite[Theorem 3]{HM} relate the extension class of $I/I^3 $ to the Ceresa cycle, and hence to the Gross--Schoen cycle \cite{GS}. Such a relation also follows from Beilinson's theorem. We will not need such a relation in what follows, and for the purposes of this paper one may take $\Delta (b)$ in the statement of Theorem \ref{thm:only_one} to be the image of $I/I^3 (b)$ in $\Ext ^1 _{\Gal (K)}(V,V^{\otimes 2}/\Q _p (1))$.
Although the the extension class $IE(x,z)$ depends on $x$ and $z$, its dependence is abelian, and easy to describe.
\begin{lemma}\label{lemma:IE}
Let $i_1 $ and $i_2$ be the inclusions $V\hookrightarrow V^* \otimes (V^{\otimes 2}/\Q _p (1))$ corresponding to left and right multiplication. Then we have the following identities in $\Ext ^1 _{\Gal (K)}(V,V^{\otimes 2}/\Q _p (1))$.
\begin{enumerate}
\item 
\[
[IE(x,z)]=[IE(x,y)]+i_{1*}[E_1 (y,z)].
\]
\item
\[
[IE(y,z)]=[IE(x,z)]+i_{2*}[E_1 (y,x)].
\]
\end{enumerate}
\end{lemma}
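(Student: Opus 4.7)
The plan is to perform an explicit cocycle computation using the path-composition structure of the truncated Malcev completion of the étale fundamental groupoid. First I would fix étale paths $\gamma_{xy} \colon x \to y$ and $\gamma_{yz} \colon y \to z$ over $\overline{K}$ and set $\gamma_{xz} := \gamma_{xy} \cdot \gamma_{yz}$. For any two basepoints $a, b$ the path $\gamma_{ab}$ trivialises $E(X; a, b)$ as a free rank-one left $E(X; a)$-module via $e \mapsto e \cdot \gamma_{ab}$, under which the Galois action reads $e \mapsto \sigma(e) \cdot \kappa_{ab}(\sigma)$, where the Kummer-type cocycle $\kappa_{ab} \colon G_K \to (E(X; a)/I^3)^\times$ is defined by $\sigma(\gamma_{ab}) = \kappa_{ab}(\sigma) \cdot \gamma_{ab}$. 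The essential feature exploited is that $IE(x, z)$ and $IE(x, y)$ thereby share the same underlying $\Q_p$-vector space $I/I^3 \, E(X; x)$, differing only in the Galois twist coming from $\kappa$.

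Next, applying $\sigma$ to $\gamma_{xz} = \gamma_{xy} \cdot \gamma_{yz}$ yields in $E(X; x)/I^3$ the identity
\[
\kappa_{xz}(\sigma) = \kappa_{xy}(\sigma) \cdot \mathrm{Ad}(\gamma_{xy})\bigl(\kappa_{yz}(\sigma)\bigr).
\]
Since $[I, I] \subset I^2$, inner conjugation by $\gamma_{xy}$ acts trivially on the associated graded $I/I^2 \simeq V$; writing $\kappa_{ab}(\sigma) = 1 + \kappa_{ab}^{(1)}(\sigma) + \kappa_{ab}^{(2)}(\sigma)$ modulo $I^3$, with $\kappa_{ab}^{(j)}(\sigma) \in I^j/I^{j+1}$, the degree-one part therefore satisfies $\kappa_{xz}^{(1)} = \kappa_{xy}^{(1)} + \kappa_{yz}^{(1)}$ in $V$. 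In particular each $\kappa_{ab}^{(1)}$ is the Kummer cocycle representing $[E_1(a, b)]$.

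Then, fixing a single $\Q_p$-splitting $\tilde s \colon V \to I/I^3\, E(X; x)$, the extension class of $IE(a, b)$ is represented by
\[
c_{ab}(\sigma)(v) := \sigma_B \tilde s(\sigma^{-1} v) - \tilde s(v), \qquad \sigma_B(i) := \sigma(i) \cdot \kappa_{ab}(\sigma).
\]
By PBW, the multiplication $I/I^2 \otimes I/I^2 \to I^2/I^3$ is the canonical quotient $V \otimes V \to V^{\otimes 2}/\Q_p(1)$, and a short expansion modulo $I^3$ yields
\[
c_{ab}(\sigma)(v) = (\sigma \tilde s \sigma^{-1} - \tilde s)(v) + v \otimes \kappa_{ab}^{(1)}(\sigma) \pmod{\Q_p(1)}.
\]
The ``intrinsic'' term depends only on $x$ and so cancels in the difference, giving
\[
c_{xz}(\sigma)(v) - c_{xy}(\sigma)(v) = v \otimes \kappa_{yz}^{(1)}(\sigma) \pmod{\Q_p(1)},
\]
which is precisely the cocycle representing $i_{1*}[E_1(y, z)]$ under the convention $i_1(w)(v) = v \otimes w \bmod \Q_p(1)$. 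This proves (1).

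Identity (2) is handled symmetrically by trivialising $E(X; a, z)$ as a right $E(X; z)$-module, so that $IE(x, z)$ and $IE(y, z)$ instead share the underlying space $I/I^3\, E(X; z)$; the analogous computation, with multiplication in the enveloping algebra now happening on the opposite side, places the cross term on the left and produces $\kappa_{xy}^{(1)}(\sigma) \otimes v$, recognised as $i_{2*}[E_1(x, y)]$. The main technical obstacle I anticipate is the bookkeeping required to verify that every non-canonical ingredient---the left/right trivialisations, the conjugations $\mathrm{Ad}(\gamma_{ab})$, and the choice of splitting $\tilde s$---becomes canonical on the $I$-adic associated graded, and to pin down the sign conventions for $i_1, i_2$ consistently. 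This is essentially forced by PBW and the vanishing of inner conjugation on $I/I^2$, but requires careful tracking.
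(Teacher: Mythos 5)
Your proof is correct, but it takes a genuinely different route from the paper. The paper's proof is a one-liner: it observes that $IE(x,y)$ is the fibre at $(x,y)$ of an \emph{abelian} $\Q_p$-local system on $X\times X$, with the $\pi_1(X\times X)^{\ab}$-module structure $(\gamma_1,\gamma_2)\cdot\gamma_3 = \gamma_1\gamma_3\gamma_2^{-1}$. Because the local system is abelian, the map $(x,y) \mapsto [IE(x,y)]$ factors through the Albanese $\Jac(X)^2$, and the two inclusions $i_1,i_2$ are exactly the two coordinates of that factorisation; the additivity in each variable is then automatic from the linearity of the Kummer map on $\pi_1(X)^{\ab}$. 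In contrast, you perform the computation the paper's argument is secretly packaging: choose étale paths, extract the Kummer cocycles $\kappa_{ab}$, and expand the twisted Galois action modulo $I^3$. Your key identities — $\kappa_{xz}^{(1)} = \kappa_{xy}^{(1)} + \kappa_{yz}^{(1)}$ from $\sigma(\gamma_{xy}\gamma_{yz})$, and $c_{ab}(\sigma)(v) = (\sigma\tilde s\sigma^{-1} - \tilde s)(v) + v\otimes\kappa_{ab}^{(1)}(\sigma)$ — are right, and the intrinsic term correctly cancels in the difference. The paper's approach is shorter and conceptually cleaner; yours makes the cocycles explicit, which is the version one actually needs when doing computations with local heights. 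One discrepancy to flag: your convention $i_1(w)(v) = v\otimes w$ makes $i_1$ into right multiplication, whereas elsewhere in the paper (the lemma on hyperelliptic curves) $i_1$ is declared to be \emph{left} multiplication and $i_2$ right. Lemma~\ref{lemma:IE} itself does not pin this down, so this is a labeling issue rather than a mathematical error, but you should reconcile your $i_1 \leftrightarrow i_2$ labeling (and the attendant sign, tied to whether $\kappa_{ab}^{(1)}$ represents $[E_1(a,b)]$ or $-[E_1(a,b)]$, and to the inverse on $\gamma_2$ in the module structure above) with the conventions used in the rest of the paper before relying on this in later arguments.
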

\begin{proof}
Recall (see e.g. \cite[\S 10]{deligne}) that $E_n (x,y)$ is the fibre at $x,y$ of the local system on $X_{\overline{K}}\times X_{\overline{K}}$ corresponding to $E_n (b,b)$, viewed as a $\pi _1 ^{\et } (X_{\overline{K}}\times X_{\overline{K}}; b,b)$ via the action $(\gamma _1 ,\gamma _2 )\cdot x=\gamma _1 \cdot x\cdot \gamma _2 ^{-1}$. Then $IE(x,y)$ inherits the structure of the fibre at $(x,y)$ of an abelian $\Q _p$-local system on $X\times X$ corresponding to the $\pi _1 (X\times X)^{\ab}$-module structure on $\Z [\pi _1 (X)]/I^2 $ given by 
\[
(\gamma _1 ,\gamma _2 )\cdot \gamma _3 :=\gamma _1 \gamma _3 \gamma _2 ^{-1}.
\]
\end{proof}

\begin{lemma}\label{lemma:W}
If $X$ is hyperelliptic, and $P$ is a Weierstrass point, then $[IE(P,P)]=0$ in $\Ext ^1 _{\Gal (K)}(V,V^{\otimes 2}/\Q _p (1))$.
\end{lemma}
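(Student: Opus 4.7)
The plan is to exploit the hyperelliptic involution $\iota:X\to X$, which is defined over $K$ and fixes every Weierstrass point, in particular $P$. Since $\iota(P)=P$, the map $(\iota,\iota):X^2\to X^2$ preserves $A=\{P\}\times X$, $B=X\times \{P\}$ and the diagonal $\Delta$; by functoriality of Beilinson's description $E(X;P,P)\simeq H^2_{\et}(X^2;A\cup B\cup \Delta,\Q_p)^*$, we get a $G_K$-equivariant, filtration-preserving automorphism $\iota^*$ of $E(X;P,P)$, hence of its sub-object $IE(P,P)$.

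Next I would identify the action of $\iota^*$ on the graded pieces of the extension
\[
0\to V^{\otimes 2}/\Q_p(1)\to IE(P,P)\to V\to 0.
\]
Since $\iota$ acts as $-1$ on $\Jac(X)$, it acts as $-1$ on $V$. Therefore on the sub $V^{\otimes 2}/\Q_p(1)$ the induced map is $(-1)\otimes(-1)=1$ (the $\Q_p(1)$-subspace being manifestly $\iota$-stable, so the action descends), while on the quotient $V$ it is $-1$.

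The main observation is the Yoneda principle: if an extension $0\to A\to E\to B\to 0$ admits an automorphism inducing $\alpha$ on $A$ and $\beta$ on $B$, then $\alpha_*[E]=\beta^*[E]$ in $\Ext^1(B,A)$. Applied here with $\alpha=1$ and $\beta=-1$, this yields
\[
[IE(P,P)]=(-1)^*[IE(P,P)]=-[IE(P,P)]
\]
in $\Ext^1_{\Gal(K)}(V,V^{\otimes 2}/\Q_p(1))\simeq H^1(\Gal(K),V^*\otimes V^{\otimes 2}/\Q_p(1))$. Thus $2[IE(P,P)]=0$, and since this Ext group is a $\Q_p$-vector space we conclude $[IE(P,P)]=0$.

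The only step requiring any genuine care is the first one, the verification that $\iota^*$ is well-defined and lives in the appropriate category of filtered Galois representations; this is essentially formal from the definition of $E(X;b,z)$ via universal pointed unipotent local systems, once one notes that $\iota$ is a $K$-automorphism of the pointed variety $(X,P)$. Everything else reduces to the sign computation on graded pieces together with the standard Yoneda functoriality in $\Ext^1$.
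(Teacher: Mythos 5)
Your proof is correct and follows the same approach as the paper: both use the hyperelliptic involution fixing $P$, functoriality to get an automorphism of $IE(P,P)$, the sign computation ($-1$ on $V$, $+1$ on $V^{\otimes 2}/\Q_p(1)$), and conclude $2[IE(P,P)]=0$. You simply spell out the Yoneda step and the final passage from $2$-torsion to zero (using that the Ext group is a $\Q_p$-vector space) more explicitly than the paper does.
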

\begin{proof}
Let $w$ denote the hyperelliptic involution. Then by functoriality
\[
[IE(P,P)]=w^* [IE(P,P)]
\]
On the other hand $w^*$ acts as $-1$ on $V$ and as $1$ on $V^{\otimes 2}/\Q _p (1)$, so we deduce that $2[IE(P,P)]=0$.
\end{proof}
\begin{lemma}
If $X$ is a hyperelliptic curve, then $E(b,z)$ is an extension of $\kappa ([z]-[b])$ in $H^1 _f (K ,V)$ by $i_{1*} (z-W)+i_{2*}  (W-b)$ in $H^1 _f (K ,V^{\otimes 3}(-1)/V)$, where $i_1$ and $i_2 $ are the maps
\[
V\to \Hom (V,V^{\otimes 2}/\Q _p (1))
\]
given by left and right multiplication respectively, and $W=\frac{1}{2g+2}D$ and $D$ is the divisor of Weierstrass points.
\end{lemma}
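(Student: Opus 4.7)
The plan is to apply the two identities of Lemma~\ref{lemma:IE} to reduce $[IE(b,z)]$ to the class $[IE(P,P)]$ for a Weierstrass point $P$, invoke the preceding lemma to kill that term, and then symmetrize by averaging over the full set of Weierstrass points to recover the divisor class $W$.

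First I would note that the mixed extension $E(b,z)$ has graded pieces $\Q_p$, $V$, $V^{\otimes 2}/\Q_p(1)$, and that the top quotient $E(b,z)/IE(b,z) \simeq E_1(b,z)$ has class $[z-b]\in H^1_f(\Q,V)$ by the defining property of $E_1$. So the substantive content is the identification of the class of the subextension $[IE(b,z)]$ in $H^1_f(\Q, V^* \otimes V^{\otimes 2}/\Q_p(1)) \simeq H^1_f(\Q, V^{\otimes 3}(-1)/V)$.

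Next, fixing a Weierstrass point $P$, I would apply identity (1) of Lemma~\ref{lemma:IE} with $x=b$, $y=P$ and then identity (2) with $x=P$, $y=b$ to obtain
\[
[IE(b,z)] = [IE(b,P)] + i_{1*}[z-P] = [IE(P,P)] + i_{2*}[b-P] + i_{1*}[z-P].
\]
By the preceding lemma $[IE(P,P)] = 0$ in $\Ext^1_{\Gal(K)}(V, V^{\otimes 2}/\Q_p(1))$, so we get $[IE(b,z)] = i_{1*}[z-P] + i_{2*}[b-P]$ for every Weierstrass point $P$, up to signs determined by the conventions for $i_2$ and identity (2).

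Finally, since the coefficient group is a $\Q_p$-vector space, I would average this equality over the $2g+2$ Weierstrass points whose sum is $D$. Noting that $z-W$ and $W-b$ are degree-zero $\Q$-divisor classes, their Kummer images are well-defined elements of $H^1_f(\Q,V)\otimes \Q$, and the averaged identity rewrites as $[IE(b,z)] = i_{1*}[z-W] + i_{2*}[W-b]$. The only delicate step is sign bookkeeping: the interplay between the convention $[E_1(x,y)] = [y-x]$, the action rule $(\gamma_1,\gamma_2)\cdot\gamma_3 = \gamma_1\gamma_3\gamma_2^{-1}$ in the definition of $I/I^3$ (where the $\gamma_2^{-1}$ twists the $i_2$-contribution), and the definitions of $i_1$, $i_2$ as left and right multiplication. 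I expect this sign reconciliation to be the only real obstacle; the rest of the argument is purely mechanical given Lemma~\ref{lemma:IE} and the vanishing $[IE(P,P)]=0$.
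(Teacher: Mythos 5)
Your strategy — apply the two identities of Lemma~\ref{lemma:IE} to reduce $[IE(b,z)]$ to $[IE(P,P)]$ for a Weierstrass point, kill that term by the preceding lemma, then average over the Galois-stable Weierstrass divisor — is exactly the argument the paper intends: the three lemmas are stated in sequence and the paper gives this one no explicit proof, treating it as an immediate deduction. The averaging step is justified precisely as you say, since the coefficient groups are $\Q_p$-vector spaces and the average is over a full Galois orbit, so the result descends.

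The one thing you should actually resolve rather than wave at is the sign in the $i_{2*}$ term. Carrying through your computation literally: identity (1) with $x=b$, $y=P$ gives $[IE(b,z)] = [IE(b,P)] + i_{1*}[z-P]$, and identity (2) with $y=b$, $x=P$, $z=P$ gives $[IE(b,P)] = [IE(P,P)] + i_{2*}[E_1(P,b)] = i_{2*}[b-P]$, so after averaging you land on $i_{1*}[z-W] + i_{2*}[b-W]$, whereas the statement asserts $i_{1*}(z-W) + i_{2*}(W-b)$. These differ by a sign in the $i_{2*}$ term. You are right that the source of the discrepancy is the interplay between the $\gamma_2^{-1}$ in the action $(\gamma_1,\gamma_2)\cdot\gamma_3=\gamma_1\gamma_3\gamma_2^{-1}$ (which introduces a minus when one unwinds ``right multiplication'') and the labelling of $i_1,i_2$ as ``left'' and ``right'' multiplication in the present lemma versus the unlabelled ``inclusions'' of Lemma~\ref{lemma:IE}. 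You should either verify that the $i_2$ of Lemma~\ref{lemma:IE} carries a built-in sign relative to the $i_2$ of this lemma (so that the two statements agree), or conclude that the stated formula should read $i_{2*}(b-W)$. As written, your proposal correctly identifies the gap but leaves it open; a complete proof must pin this down. Note also that this sign ambiguity does not affect the vanishing checks (e.g.\ $[IE(P,P)]=0$ for Weierstrass $P$ holds under either convention, since $P-W$ is torsion), so those sanity tests will not discriminate.
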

\begin{proof}
This follows from Lemma \ref{lemma:IE} and Lemma \ref{lemma:W}.
\end{proof}
In our study of mixed extensions associated to $2$-nilpotent fundamental groups, we may choose $W$ to be any direct summand of $V^{\otimes 2}/\Q _p (1)$. We define $E_W (b,z)$ to be the mixed extension with graded pieces $\Q _p ,V$ and $W$ obtained by quotienting $E(b,z)$ by $\Ker (V^{\otimes 2}/\Q _p (1)\to W)$. We have a direct sum decomposition 
\[
V^{\otimes 2}/\Q _p (1)\simeq (\wedge ^2 V/\Q _p (1))\oplus \Sym ^2 V.
\]
\begin{lemma}\label{lemma:sym_boring}
We have an isomorphism of mixed extensions
\[
E(b,z)/(\wedge ^2 V/\Q _p (1))\simeq \Sym ^2 E_1 (b,z).
\]
\end{lemma}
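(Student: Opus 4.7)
The plan is to construct the required isomorphism using the cocommutative Hopf algebra structure on the completed group algebra of $\pi _1 ^{\et }(X_{\overline K}, b)$. The coproduct $\Delta (\gamma )=\gamma \otimes \gamma $ satisfies $\Delta (I^k )\subset \sum _{i+j=k}I^i \otimes I^j $, so composing with the projection $\Q _p [\pi _1 ]\otimes \Q _p [\pi _1 ]\to \Q _p [\pi _1 ]/I^2 \otimes \Q _p [\pi _1 ]/I^2 $ kills $I^3 $ and induces a morphism of local systems
\[
\bar\Delta :E(X;b)\to E_1 (X;b)\otimes E_1 (X;b).
\]
Since $\Delta $ is cocommutative, $\bar\Delta $ factors through $\Sym ^2 E_1 (X;b)$, and taking the fibre at $z$ yields a Galois-equivariant map $E(b,z)\to \Sym ^2 E_1 (b,z)$: this will be the claimed isomorphism after we identify its kernel.

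I would then verify the statement by computing on associated gradeds. Setting $\bar\gamma :=\gamma -1\in I$, the identity $\Delta (\bar\gamma )=\bar\gamma \otimes 1+1\otimes \bar\gamma +\bar\gamma \otimes \bar\gamma $ shows that on weight $-1$ the induced map $I/I^2 =V\to V$ (the weight $-1$ piece of $\Sym ^2 E_1$) is the diagonal, hence an isomorphism. A short computation on weight $-2$ gives
\[
\bar\Delta (\bar\gamma _1 \bar\gamma _2 )\equiv \bar\gamma _1 \otimes \bar\gamma _2 +\bar\gamma _2 \otimes \bar\gamma _1 \pmod{I^2 \otimes 1+1\otimes I^2 },
\]
so the weight $-2$ component factors as (twice) the symmetrization $V\otimes V\to \Sym ^2 V$.

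The key step is then to identify the kernel on weight $-2$. The symmetrization has kernel $\wedge ^2 V$, and the relation presenting $I^2 /I^3 $ as a quotient of $V\otimes V$ is generated precisely by the cup product embedding $\Q _p (1)\hookrightarrow \wedge ^2 V\subset V\otimes V$. Hence the induced kernel on $I^2 /I^3 =V^{\otimes 2}/\Q _p (1)$ is exactly $\wedge ^2 V/\Q _p (1)$. Combined with the isomorphism on lower weights, this shows $\bar\Delta $ descends to an isomorphism $E(b,z)/(\wedge ^2 V/\Q _p (1))\isoarrow \Sym ^2 E_1 (b,z)$ of mixed extensions (after absorbing the factor of $2$ into the canonical identification of the weight $-1$ piece).

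The main potential obstacle is checking that this abstractly-defined coproduct really lifts to a morphism of \'etale local systems compatible with the Galois action on the fibre at $z$, rather than merely an abstract map of $\pi _1 $-modules. This is formal: the Hopf structure on $\Q _p [\pi _1 ^{\et }]$ is built from the group law, which is preserved by $\Gal (K)$, so $\Delta $ is automatically Galois-equivariant; alternatively one could sidestep the issue by starting from Beilinson's cohomological description $E(X;b,z)\simeq H^2 _{\et }(X^2 ;A\cup B\cup \Delta ,\Q _p )^*$ and deducing the lemma from an excision/symmetry argument comparing the contributions of $\Delta $ and of its pullback under the swap of factors on $X\times X$.
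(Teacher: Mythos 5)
Your proof is correct and uses essentially the same idea as the paper: the paper's proof consists of simply asserting the Galois-equivariant isomorphism of $\pi_1(X_{\overline K},b)$-representations $E(b)/(\wedge^2 V/\Q_p(1))\simeq \Sym^2 E_1(b)$, which is precisely the coproduct argument you spell out (the displayed identity on weight $-2$ and the identification of the kernel with the quadratic relation $\Q_p(1)\hookrightarrow \wedge^2 V$ being the content). Your version is a useful expansion of the paper's one-line proof rather than a different route.
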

\begin{proof}
This follows from the Galois equivariant isomorphism of $\pi _1 (X_{\overline{K}},b)$-representations $E(b)/(\wedge ^2 V/\Q _p (1))\simeq \Sym ^2 E_1 (b)$.
\end{proof}

It follows from the definitions that the generalised height function satisfies Galois descent, i.e. if $L|K$ is a finite Galois extension of number fields the diagram 
\[
\begin{tikzcd}
\Q _p [H^1 _g (L,U(\Q _p ,V,W))/H^1 _g (L,W)]^{\Gal (L|K)} \arrow[r]           & \big( (\oplus _{w}H^1 _g (L_w ,W))/\loc H^1 _g (L ,W) \big) ^{\Gal (L|K)}           \\
\Q _p [H^1 _g (K,U(\Q _p ,V,W))/H^1 _g (K,W)] \arrow[r]   \arrow[u]        & (\oplus _{v}H^1 _g (K_v ,W))/\loc H^1 _g (K ,W)   \arrow[u, "\simeq "] \\
\end{tikzcd}
\]
commutes. Using the right-hand isomorphism, we can think of the generalised height as defining a function
\begin{align*}
& \colim _{L|K }\mathrm{Im}\left( \Q _p [H^1 _g (L,U(\Q _p,V,W)]^{\Gal (L|K )} \to \colim _{L|K}(H^1 _f (L ,V)\otimes H^1 _f (L,V^* \otimes W))^{\Gal (L|K )} \right)  \\
& \to (\oplus _v H^1 _g (K _v ,W)/H^1 _g (K ,W),
\end{align*}
where the colimit is over finite Galois extensions $L|K $. We now consider how to define height functions on divisors, which could be viewed as generalisations of the usual quadratic $p$-adic height function on an elliptic curve.

\begin{lemma}
If $D=\sum n_i z_i $ is a principal divisor on a curve $X$, then for any basepoints $x,y$,
\[
\sum n_i h(E_W (x,z_i ))=\sum n_i h(E_W (y,z_i )).
\]
\end{lemma}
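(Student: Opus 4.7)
The plan is to reduce the statement to the bilinearity property $\Ker(h)\supset\Ker(b)$ of the generalised height established earlier in the subsection. Setting
\[
\xi:=\sum n_i[E_W(x,z_i)]-\sum n_i[E_W(y,z_i)]\in\Q_p[M_g(\Q_p,V,W)],
\]
it suffices to verify that $b(\xi)=0$ in $H^1_f(K,V)\otimes H^1_f(K,V^*\otimes W)$: extending $h$ linearly and applying $\Ker(h)\supset\Ker(b)$ then yields the claim. If the $z_i$ are not $K$-rational I would first pass to a finite Galois extension $L/K$ splitting the support of $D$ and invoke the preceding Galois descent lemma to transport the identity back down to $K$.

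To compute $b(\xi)$, recall that $\pi_{1*}(E_W(x,z_i))=[z_i-x]\in H^1_f(K,V)$ is the usual Kummer class, while $\pi_{2*}(E_W(x,z_i))=[IE_W(x,z_i)]\in H^1_f(K,V^*\otimes W)$. Applying Lemma~\ref{lemma:IE}(1) with each of the basepoints $x$ and $y$ and subtracting gives
\[
[IE_W(x,z_i)]-[IE_W(y,z_i)]=[IE_W(x,y)]-[IE_W(y,y)]=:e,
\]
a class that is crucially independent of $i$. Expanding tensors and telescoping then gives
\[
b(\xi)=[y-x]\otimes\Bigl(\sum n_i[IE_W(y,z_i)]\Bigr)+\Bigl(\sum n_i[z_i-x]\Bigr)\otimes e.
\]

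Now invoke that $D$ is a principal divisor on a curve: its degree vanishes, so $\sum n_i=0$, and its Kummer class vanishes, so $\sum n_i[z_i-x]=\sum n_i[z_i-y]=0$. This kills the second parenthesis above directly. For the first, a second application of Lemma~\ref{lemma:IE}(1) yields $\sum n_i[IE_W(y,z_i)]=(\sum n_i)[IE_W(y,y)]+i_{1*}\bigl(\sum n_i[z_i-y]\bigr)=0$. Hence $b(\xi)=0$, completing the proof. The computation is entirely formal once Lemma~\ref{lemma:IE} is in hand; the only subtlety I anticipate is the rationality of the $z_i$, handled by the Galois descent step at the outset, and a small sanity check that the heights $h(E_W(x,z_i))$ are indeed defined, which is automatic since each $E_W(x,z_i)$ is a global mixed extension coming from the fundamental group.
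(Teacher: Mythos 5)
Your proof is correct and takes essentially the same approach as the paper: reduce via $\Ker(h)\supset\Ker(b)$ to showing $\sum_i n_i[\kappa(x,z_i)]\otimes[IE_W(x,z_i)]=\sum_i n_i[\kappa(y,z_i)]\otimes[IE_W(y,z_i)]$, and deduce this from Lemma~\ref{lemma:IE}. The paper compresses the second step into ``which follows from Lemma~\ref{lemma:IE}''; you have filled in precisely the telescoping computation (together with the use of $\sum n_i=0$ and $\sum n_i[z_i-x]=0$ from principality of $D$) that makes that assertion go through.
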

\begin{proof}
It is enough to prove that
\[
\sum _i n_i [\kappa ([x]-[z_i ])]\otimes [IE_W (x,z_i )]=\sum _i n_i [\kappa ([y]-[z_i ] )]\otimes [IE_W (y,z_i )],
\]
which follows from Lemma \ref{lemma:IE}.
\end{proof}
We will henceforth simply write $h_W (D)$, or $h_{X,W}(D)$, or $h_X (D)$, for the $W$-generalised height associated to a principal divisor $D$.
\subsection{Mixed extensions from curves in $X\times X$}\label{subsec:curve}
Fix a closed immersion $(f_1 ,f_2 ):C\hookrightarrow X\times X$, where $C$ is a geometrically irreducible curve whose Jacobian surjects onto $\Jac (X)^2 $ via the map induced by the $f_i$. We choose a direct sum decomposition
\[
V_C \simeq V_A \oplus V_B
\]
where $V_C$ is the $\Q _p$-Tate module of the Jacobian of $C$, $V_A$ is the $\Q _p$-Tate module of the connected component of the identity $A$ of 
\[
\Ker (\Jac (C)\to \Jac (X)^2 )
\]
induced by $(f_1 ,f_2 )$, and $V_B $ is the $\Q _p $-Tate module of $B:=\Jac (C)/A$.
We obtain a surjection
\[
V_C ^{\otimes 2} \to \wedge ^2 V.
\]
induced by the two maps from $V_C$ to $V$. 
Hence for any pair of points $x,y\in C(K)$, we obtain a mixed extension $E_C (x,y) \in \mathcal{M}(\Q _p ,V_C ,W)$, where $W:=\wedge ^2 V/(\NS (\Jac (X)_{\overline{\Q }})^* \otimes \Q _p (1))$. Recall that, by Faltings' proof of the Tate conjecture for $H^2 $ of an abelian variety over a number field, $\NS (\Jac (X))^* \otimes \Q _p (1)$ may be viewed as a direct summand of $\wedge ^2 V$, and that it necessarily contains the image of $\Ker (\wedge ^2 V_C \to \gr _2 E_2 (C))$. Now let $R:=\End (B)\otimes \Q _p \times \End (A) \otimes \Q _p \subset \End _{\Gal (K)}(V_C )$, let $R_0 :=\End (\Jac (X))\otimes \Q _p $ and $R_1 :=\End (A)\otimes \Q _p $. Then any associated $R$-equivariant generalised height defines a map which factors through
\begin{align*}
& H^1 _f (\Q ,V_C)\otimes _R H^1 _f (\Q ,V_C ^* \otimes W)
\simeq & H^1 _f (\Q ,V)\otimes _{R_0 } H^1 _f (\Q ,V^* \otimes W)\oplus H^1 _f (\Q ,V_A )\otimes _{R_1 }H^1 _f (\Q ,V_A ^* \otimes W).
\end{align*}
We explain this isomorphism in a more general setting.
\begin{lemma}
\begin{enumerate}
\item Given $K$-algebras $R$ and $S$, $R$-modules $M_1 $ and $M_2 $ and $S$-modules $N_1 $ and $N_2$, we have an isomorphism
\[
(M_1 \oplus N_1 )\otimes _{R\times S}(M_2 \oplus N_2 ) \simeq M_1 \otimes _R M_2 \oplus N_1 \otimes _S N_2 
\]
(where we view $M_i$ as an $R\times S$-module in the obvious way, and the same for the $N_i $).
\item Given $R$-modules $M_1 $ and $M_2 $,
\[
M_1 ^{\oplus n}\otimes _{\Mat _n (R)}M_2 ^{\oplus n}\simeq M_1 \otimes _R M_2 .
\]
\end{enumerate}
\end{lemma}
\begin{proof}
For part (1), the natural map $(M_1 \oplus N_1 )\times (M_2 \oplus N_2 )\to M_1 \otimes M_2 \oplus N_1 \otimes N_2 $ satisfies the universal property for the tensor product, because $M_1 \otimes N_2 =N_1 \otimes M_2 =0$ for example by a simple argument considering the action of $R\times \{ 0 \}$ and $\{ 0 \} \times S$. For part (2), similarly the map $M_1 ^{\oplus n}\times M_2 ^{\oplus n}\to M_1 \otimes _R M_2 $ sending $((m_i ),(m' _i ))$ to $\sum m_i \otimes m'_i$ satisfies the universal property for $\Mat _n (R)$-bilinear maps.
\end{proof}
\begin{lemma}\label{lemma:EC}
The class of $E_C (x,y)$ in $H^1 _f (\Q ,V_A ^* \otimes W)$ is independent of $x$ and $y$.
\end{lemma}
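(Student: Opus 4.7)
The plan is to reduce the statement to an analog of Lemma \ref{lemma:IE} for $C$ in place of $X$, and then observe that the resulting variation vanishes once restricted to the $V_A$-summand, because the quotient $q : V_C^{\otimes 2} \to V^{\otimes 2}$ defining $E_C$ factors through the projection $V_C \twoheadrightarrow V \oplus V$ whose kernel is $V_A$.

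Write $V_C := T_p\Jac(C) \otimes \Q_p \simeq V^{\oplus 2} \oplus V_A$. The class of $E_C(x,y)$ in $H^1_f(\Q, V_A^* \otimes V^{\otimes 2})$ is the image of the underlying extension $[IE_C(x,y)] \in \Ext^1_{\Gal(K)}(V_C, V^{\otimes 2}) \simeq H^1(\Q, V_C^* \otimes V^{\otimes 2})$ under the restriction map along $V_A \hookrightarrow V_C$. Repeating the computation of Lemma \ref{lemma:IE} inside $C$ --- using the $\pi_1(C \times C)^{\ab}$-action $(\gamma_1,\gamma_2) \cdot \gamma_3 = \gamma_1 \gamma_3 \gamma_2^{-1}$ on $\Z[\pi_1(C)]/I^2$ and then pushing forward to $V^{\otimes 2}$ --- shows that, for any $x, x', y, y' \in C(K)$, the difference $[IE_C(x,y)] - [IE_C(x',y')]$ lies in the sum of the images of the ``multiplication'' maps
$$i_1^C, i_2^C : H^1_f(\Q, V_C) \to H^1_f(\Q, V_C^* \otimes V^{\otimes 2})$$
given by $v \mapsto q(v \otimes -)$ and $v \mapsto q(- \otimes v)$, where $q$ is the quotient map that enters the definition of $E_C$.

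The key observation is then that $q$ factors through $V_C^{\otimes 2} \twoheadrightarrow (V \oplus V)^{\otimes 2}$: the surjection $\wedge^2 V_C \twoheadrightarrow V^{\otimes 2}$ used to form $E_C$ is, by construction, the cross-summand of $\wedge^2(f_{1*}, f_{2*})$ in the decomposition $\wedge^2(V \oplus V) = \wedge^2 V \oplus (V \otimes V) \oplus \wedge^2 V$. Since $V_A$ is the kernel of $(f_{1*}, f_{2*}) : V_C \to V \oplus V$, both $V_A \otimes V_C$ and $V_C \otimes V_A$ lie in $\ker q$. Hence $i_1^C(v)|_{V_A} = 0 = i_2^C(v)|_{V_A}$ for every $v \in V_C$, so restricting the variation along $V_A \hookrightarrow V_C$ gives zero, proving the lemma.

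The main (mild) obstacle is articulating the analog of Lemma \ref{lemma:IE} cleanly for $C$ with $V^{\otimes 2}$ as the top graded piece, i.e.\ verifying that the same quotient $q$ governs both the mixed-extension structure of $E_C(x,y)$ and the multiplication maps $i_1^C, i_2^C$ controlling its dependence on $(x,y)$. This should follow routinely from Beilinson's cohomological description together with the naturality of the augmentation ideal filtration on the group algebra, but is the one point requiring care.
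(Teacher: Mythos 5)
Your proof is correct and takes the same approach as the paper, whose entire proof is the one-liner ``This follows from Lemma \ref{lemma:IE}.'' Your elaboration---applying the $C$-analogue of Lemma \ref{lemma:IE} to see that the variation of $[IE_C(x,y)]$ as $(x,y)$ moves lies in the image of the two multiplication maps out of $V_C$, and then noting that these maps kill the $V_A$-component because the surjection $\wedge^2 V_C \twoheadrightarrow V^{\otimes 2}$ factors through $\wedge^2(V\oplus V)$ (so $V_A\otimes V_C$ and $V_C\otimes V_A$ lie in its kernel)---is precisely the content that the paper's citation leaves implicit.
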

\begin{proof}
The class of $E_C (x,y)$ in $H^1 _f (\Q ,V_A ^* \otimes W)$ is the image of the class $[IE(x,y)]$ in $H^1 _f (\Q ,V_C ^* \otimes (\wedge ^2 V_C /\Q _p (1))$ under the quotient map
\[
H^1 _f (\Q ,V_C ^* \otimes (\wedge ^2 V_C /\Q _p (1))\to H^1 _f (\Q ,V_A ^* \otimes W)
\]
This follows from Lemma \ref{lemma:IE}.
\end{proof}
Hence the map
\[
\Q _p [C(\overline{\Q})\times C(\overline{\Q })]^{\Gal (\overline{\Q }|\Q )}\to \colim _{K|\Q }(H^1 _f (K,V_C)\otimes _R H^1 _f (K,V_C ^* \otimes W))^{\Gal (K|\Q )}
\]
induces a map
\[
\theta :\PDiv ^0 (C)\to \colim _{K|\Q }(H^1 _f (K,V)\otimes H^1 _f (K,V^* \otimes W))^{\Gal (K|\Q )}.
\]
We deduce the following.
\begin{lemma}\label{lemma:find_div}
Let $C\subset X\times X$ be a geometrically irreducible curve, $W$ as above, and $D=\sum n_i (P_i ,Q_i )$ a principal divisor on $C$. Let $h_C$ denote the generalised height on $M _g (\Q _p ,V_C ,W)$ with respect to an $R$-equivariant splitting $s$ of the Hodge filtration as above. Let $h$ denote the generalised height on $X$ with respect to a splitting compatible with $s$. Suppose the class of $\sum n_i P_i \otimes Q _i $ in $H^1 _f (K,V)^{\otimes 2}\subset H^1 _f (K,V)\otimes H^1 _f (K,V^* \otimes W)$ is in the image of $\colim \Q _p [H^1 _g (K,U(\Q _p ,V,W))]^{\Gal (K|\Q )}$. 
Then
\[
h_C (D)=h(\sum n_i P_i \otimes Q _i )
\]
where the height on the right hand side is the generalised height on the image of
\[
\colim \Q _p [H^1 _g (K,U(\Q _p ,V,W))]^{\Gal (K|\Q )}\to \colim \left( H^1 _f (K,V)\otimes H^1 _f (K,V^* \otimes W) \right) ^{\Gal (K|\Q )}  .
\]
\end{lemma}
\begin{proof}
For all primes $v$, if $M$ is a mixed extension in $H^1 _g (K_v ,U(\Q _p ,V,W))$ then by construction we have
\[
h_{v}(M)=h_{C,v}(M\oplus T )
\]
where $T:=\Ker (V_C \stackrel{f_{1 *}}{\longrightarrow }V)$, and $M\oplus T$ is viewed as an element of $H^1 _g (K_v ,U(\Q _p ,V_C ,W))$ in the obvious way.

On the other hand, we have a commutative diagram
\begin{small}
\[
\begin{tikzcd}
\Q _p [H^1 _g (K,U(\Q _p ,V,W))] \arrow[d] \arrow[r] & \Q _p [H^1 _g (K,U(\Q _p ,V_C ,W))]  \arrow[d] \\
H^1 _f (K,V)\otimes _{R_0 }H^1 _f (K,V^* \otimes W) \arrow[r]           & H^1 _f (K,V)\otimes _{R_0 } H^1 _f (K,(V^* \otimes W) \oplus H^1 _f (K,V_A )\otimes _{R_1 } H^1 _f (K,V_A ^* \otimes W).          
\end{tikzcd}
\]
\end{small}
Hence the lemma follows from Lemma \ref{lemma:EC}.
\end{proof}
\begin{remark}
Note that the cohomological interpretation of the generalised heights constructed from the curves $C$ is to look at the Galois representations associated to appropriate subquotients of $H^2 (X^2 -C \cup \{P_i \} \times X \cup X\times \{Q _j \})$. This is one motivation for the constructions in the next section.
\end{remark}
\subsection{Generalised heights and quadratic Chabauty}\label{ludtke}
We briefly recall (and correct!) the relation between generalised heights and quadratic Chabauty described in \cite{QC2}. The map
\begin{align*}
X(K)\to M_g (\Gal (K);\Q _p ,V,V^{\otimes 2}/\Q _p (1))
\end{align*}
defined by sending $z\in X(K)$ to $E_2 (X;b,z)$ factors through Kim's nonabelian Kummer map
\[
j_2 :X(K)\to H^1 (\Gal (K),U_2 )
\]
via the map
\[
H^1 (\Gal (K),U_2 )\to H^1 (\Gal (K),U(\Q _p ,V,V^{\otimes 2}/\Q _p (1)))
\]
defined as follows. The action of $U_2 $ on the enveloping algebra of its Lie algebra induces a homomorphism from $U_2 $ to the group $\Aut ^{\un }(E_2 (X,b))$ of unipotent automorphisms of the filtered algebra $E_2 (X;b)$. This gives a map
\[
H^1 (\Gal (K),U_2 )\to H^1 (\Gal (K),\Aut ^{\un }(E_2 (X,b))).
\]
The set of unipotent isomorphisms from $E_2 (X,b)$ to its associated graded $\Q _p \oplus V\oplus V^{\otimes 2}/\Q _p (1)$ has the structure of a Galois equivariant \\
$(U(\Q _p ,V,V^{\otimes 2}/\Q _p (1)),\Aut ^{\un }(E_2 (X,b)))$-bitorsor, giving an isomorphism
\[
H^1 (K,U(\Q _p ,V,V^{\otimes 2}/\Q _p (1))\simeq H^1 (K,\Aut ^{\un }(E_2 (X;b,z))).
\]
hence we obtain a map from $H^1 (K,U_2 )$ to $H^1 (K,U(\Q _p ,V,V^{\otimes 2}/\Q _p (1)))$ as desired. In the language of mixed extensions, this sends a $U_2 $-torsor $P$ to the twist of $E_2 (X,b)$ by $P$.

We obtain a commutative diagram
\[
\begin{tikzcd}
X(\Q ) \arrow[d] \arrow[r] & H^1 _g (\Q ,U_2 ) \arrow[d] \arrow[r] & H^1 _g (\Q ,U(\Q _p ,V,V^{\otimes 2}/\Q _p (1))) \arrow[d] \\
\prod X(\Q _v ) \arrow[r]           & \prod H^1 _g (\Q _v ,U_2 ) \arrow[r]            & \prod H^1 _g (\Q _v ,U(\Q _p ,V,V^{\otimes 2}/\Q _p (1)))          
\end{tikzcd}
\]
Recall the set $X(\Q _p )_2 \subset X(\Q _p )$ is the projection to $X(\Q _p )$ of the subset of $\prod _v X(\Q _v )$ whose image in $\prod H^1 _g (\Q _v ,U_2 )$ lies in the (scheme-theoretic) image of $H^1 _g (\Q ,U_2 )$. The maps
\[
H^1 _g (K,U_2 )\to M_g (K_v ;\Q _p ,V,V^{\otimes 2}/\Q _p (1))
\]
for $K=\Q $ or $\Q _v$ are injective. In \cite[Proof of Lemma 4.1]{QC2}, we erroneously claimed that this implies that $P\in H^1 _f (\Q _p ,U_2 )$ is in the image of $H^1 _g (\Q ,U_2 )$ if and only if $E(X,b)^{(P)}\in M_g (\Q _p ;\Q _p ,V,V^{\otimes 2}/\Q _p (1))$ is in the image of $M_g (\Q ;\Q _p ,V,V^{\otimes 2}/\Q _p (1))$. Although we do not know of a counterexample, there is no reason that this should be true, as if the dimension of $H^1 _f (\Q ,V^{\otimes 3}(-1)/V)$ is very large then it could happen that $\dim H^1 _f (\Q ,U_2 )<\dim H^1 _f (\Q _p ,U_2 )$ but $M_g (\Q ;\Q _p ,V,V^{\otimes 2}/\Q _p (1))$ surjects onto $M_g (\Q _p ;\Q _p ,V,V^{\otimes 2}/\Q _p (1))$. The correct statement is that $P$ is in the image of $H^1 _g (\Q ,U_2 )$ if and only if $E(X,b)^{(P)}$ comes from a global mixed extension in the image of $H^1 _g (\Q ,U_2 )$. I am grateful to Martin L\"udtke for pointing this out.
\section{The Albanese kernel and mixed extensions from algebraic cycles}\label{sec:alb}
Let $K$ be a field and $Z/K$ a smooth projective irreducible variety of dimension $d$. The Albanese kernel is defined to be
\[
T(Z):=\Ker (\CH^d (Z)_0 \to \Alb (Z)).
\]
In the case where $K$ is a number field, (a special case of) the Beilinson--Bloch conjectures may be stated as follows.
\begin{conjecture}[\cite{beilinson}]\label{conj:BB}
$T(Z)$ is a finite group.
\end{conjecture}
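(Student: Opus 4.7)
The statement is the Beilinson--Bloch conjecture on the Albanese kernel, which is open in general; any honest proof plan is really a catalogue of partial strategies, each of which has yielded only special cases.

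The first reduction I would make is to the case of surfaces, via the Bloch--Srinivas method of decomposition of the diagonal. Concretely, one seeks an integer $N$ and a rational equivalence expressing $N \cdot \Delta_Z \in \CH^d(Z \times Z)$ as a sum of a cycle supported on $D \times Z$ (for some divisor $D$) and a cycle factoring through $\Alb(Z)$; letting this correspondence act on $T(Z)$ then forces $T(Z) \otimes \Q$ to vanish, and combined with an $\ell$-adic torsion finiteness input (along the lines of Merkurjev--Suslin for codimension two, or more generally for products with projective space) gives that $T(Z)$ itself is finite. Producing such a decomposition is the heart of the matter, and for a surface it is essentially equivalent to the vanishing of the transcendental part of the diagonal class in $\CH^2(Z \times Z) \otimes \Q$.

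The second step is to invoke a (conjectural) Bloch--Beilinson--Murre filtration whose appropriate graded piece is identified with $T(Z) \otimes \Q$ and, over a number field, is controlled by Galois $\Ext^2$ groups with coefficients in the transcendental part of $H^{2d-2}_{\et}(Z,\Q_p(d-1))$. The general philosophy of \cite{beilinson} predicts these $\Ext^2$ groups are finite dimensional and, via $L$-value formulas, constrained enough to force the rational Albanese kernel to vanish. In favourable special cases---products of curves of low genus, Kimura--O'Sullivan finite-dimensional motives, surfaces with $p_g = 0$ covered by Bloch's conjecture, certain Hilbert modular surfaces treated by Ribet-style arguments---one can carry out a version of this plan directly and obtain the conclusion.

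The principal obstacle, and the reason Conjecture \ref{conj:BB} has stood since the 1970s, is that in general no known technique converts rational vanishing of transcendental cohomology over $\overline{K}$ into finiteness of rational Chow groups over $K$. Without finite-dimensionality of the Chow motive of $Z$---unknown outside restricted classes---the decomposition-of-the-diagonal step is unavailable, and without a provable Bloch--Beilinson filtration the reduction to Galois $\Ext^2$ is only heuristic. For this reason the present paper uses Conjecture \ref{conj:BB} as a hypothesis rather than attempting a proof, with the role of the conjecture being to guarantee termination of the algorithm of Theorem \ref{thm:only_one}.
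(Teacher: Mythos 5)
You are right that this statement is not proved in the paper at all: it is the Beilinson--Bloch conjecture, cited from \cite{beilinson} and used only as a hypothesis (to guarantee the conclusion of Theorem \ref{thm:only_one} and Lemma \ref{lemma:only_one}), so there is no proof in the paper to compare against and none is expected. Your survey of partial strategies (decomposition of the diagonal, Merkurjev--Suslin-type torsion control, conjectural Bloch--Beilinson filtrations) is a fair account of the state of the art, but, as you yourself acknowledge, it does not and cannot amount to a proof, which matches the paper's treatment exactly.
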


We are mostly interested in the case $Z=X\times X$, where $X/\Q $ is a curve of genus $g>1$, and describe applications of this conjecture to the determination of rational points on $X$. This idea of using $\CH ^2 (X^2 )$ to study $X(\Q )$ also appears in work of Esnault and Wittenberg \cite{EW}, where they construct a class in $\CH ^2 (X^2 )$ giving an obstruction to an $X$ with a divisor of degree one containing a rational point, refining the obstruction coming from the $2$-nilpotent fundamental group. Although this work was an inspiration for the present paper, our setting is simpler and our results more modest: we assume that we have a rational point, and seek to use Conjecture \ref{conj:BB} to determine $X(\Q )$ via the Chabauty--Kim method \cite{kim:chabauty} \cite{kim:siegel}.

We have a map \cite{somekawa} \cite{RS}
\[
\Div (X)\times \Div (X)\to Z^2 (X^2 )
\]
given by
\[
(\sum n_i P_i ,\sum m_j Q_j )\mapsto \sum n_i m_j (P_i ,Q_j ).
\]
We write the image of $(D_1 ,D_2 )$ as $D_1 \boxtimes D_2 $.
This induces a bilinear map (the Somekawa cup product)
\[
\Pic ^0 (X)\times \Pic ^0 (X)\to T(X\times X),
\]
More generally, for any finite Galois extension $L|K$,  this defines a map
\[
(\Pic ^0 (X)(L)^{\otimes 2} )^{\Gal (L|K)}\to T(X\times X)(K).
\]
which gives an isomorphism \cite{RS}
\[
\colim _{L|K}(\Pic ^0 (X)(L)^{\otimes 2} )^{\Gal (L|K)}\stackrel{\simeq }{\longrightarrow} T(X\times X)(K).
\]
\begin{lemma}\label{lemma:only_one}
Assuming the Beilinson--Bloch conjecture, there is an algorithm which, given a pair of points $P_1 , P_2 \in \Jac (X)$ (represented by divisors $D_1 $ and $D_2$), returns a positive integer $n$, a finite set of curves $\iota _i :C_i \hookrightarrow X_{\overline{\Q }}\times X_{\overline{\Q }}$, and principal divisors $\divv (f_i ) $ on $C_i$, such that the generalised height pairing $h(P_1 ,P_2 )$ is equal to
\[
\frac{1}{n}\sum h_{\iota _i }(\divv (f_i ) ).
\]
\end{lemma}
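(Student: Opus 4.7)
The plan is to use the Raskind--Spiess presentation of $T(X \times X)(\Q)$ in tandem with Lemma \ref{lemma:find_div}, converting the finiteness statement of Conjecture \ref{conj:BB} into an explicit identity for the generalised height. Form the Somekawa class $P_1 \boxtimes P_2 \in T(X \times X)(\Q)$; under Conjecture \ref{conj:BB} this class is torsion, so there exists a positive integer $N$ with $N(P_1 \boxtimes P_2)=0$. The isomorphism $\colim_{L|\Q}(\Jac(X)(L)^{\otimes 2})^{\Gal(L|\Q)} \isoarrow T(X \times X)(\Q)$ has its kernel generated by the Weil reciprocity relations attached to pairs $(\iota: C \hookrightarrow X_L \times X_L,\ f \in L(C)^\times)$, namely $\sum_{v \in C^{(1)}}\mathrm{ord}_v(f)\,(\pi_1(\iota(v))\otimes \pi_2(\iota(v)))_{L(v)}$. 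Hence $N[P_1 \otimes P_2]$ equals a finite sum $\sum_i (\iota_i)_*(\divv f_i)$ in the colimit above, for finitely many data $(\iota_i, f_i)$ with $\iota_i$ defined over some $L_i/\Q$.

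I would then transport this to heights. By Lemma \ref{lemma:find_div} each summand $(\iota_i)_*(\divv f_i)$ is the image of a class in $H^1_f(L_i, U(\Q_p, V^{\oplus n_i}, V^{\otimes 2}/\Q_p(1)))$, so $h_{\iota_i}(\divv f_i)$ is a well-defined $\Mat_{n_i}(\Q_p)$-equivariant generalised height. Combining the bilinearity of $h$ with the Galois descent lemma of \S\ref{subsec:curve}, and using that the generalised height factors through the image in $H^1(K,V)\otimes H^1(K,V^*\otimes V^{\otimes 2}/\Q_p(1))$, yields
\[
N\, h(P_1, P_2) = \sum_i h_{\iota_i}(\divv f_i),
\]
and dividing by $N$ in the $\Q_p$-vector space $\oplus_v H^1_g(\Q_v, V^{\otimes 2}/\Q_p(1))/\loc H^1_g(\Q, V^{\otimes 2}/\Q_p(1))$ gives the claimed formula.

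The algorithm then enumerates tuples $(N, L_i, \iota_i, f_i)$ in order of increasing complexity (degree of $L_i$, bidegree of the curves $C_i$ inside $X \times X$, degree of $f_i$) and, for each candidate, tests whether the identity $N[P_1 \otimes P_2] = \sum_i (\iota_i)_*(\divv f_i)$ holds in $(\Jac(X)(L_0)^{\otimes 2})^{\Gal(L_0|\Q)}$ for a sufficiently large common Galois extension $L_0/\Q$; this check reduces to linear algebra in the Mordell--Weil groups $\Jac(X)(L_i)$, hence is effective given computable bases. Termination of this semi-decision procedure is guaranteed precisely by Conjecture \ref{conj:BB}. The main obstacle I anticipate is making the Raskind--Spiess presentation geometrically concrete: a priori it uses curves equipped only with morphisms to $\Jac(X) \times \Jac(X)$ rather than closed immersions into $X \times X$, so one must argue---perhaps by replacing a morphism $C \to \Jac(X)$ by a correspondence via a symmetric power of $X$, or by an appropriate moving lemma---that every reciprocity relation can be realised by a principal divisor on a curve of the form demanded by the statement. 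Once this translation is in hand, the rest of the argument is essentially formal.
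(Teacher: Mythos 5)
Your proof follows the same route as the paper's two-line argument: reduce to exhibiting $D_1\boxtimes D_2$ (or a multiple) as a sum of principal divisors on curves inside $X\times X$, then apply Lemma~\ref{lemma:find_div} and $\Q_p$-linearity of the generalised height. You are in fact more careful than the paper, which simply asserts that Beilinson--Bloch lets one write $D_1\boxtimes D_2=\sum_i \divv_{C_i}(D_i)$, glossing over the fact that the conjecture only makes the class torsion rather than zero; your step of finding $N$ with $N(P_1\boxtimes P_2)=0$ and dividing by $N$ at the end in the ambient $\Q_p$-vector space is the honest version. (This may also explain the dangling integers $n_j$ and points $Q_j$ in the lemma's statement, which look like a relic of the parallel Theorem~\ref{thm:only_one}.)

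The one place you overcomplicate is the ``main obstacle'' at the end, which does not actually arise. You worry that the Raskind--Spiess presentation describes $T(X\times X)$ via curves mapping to $\Jac(X)\times\Jac(X)$, so one would need to convert these into closed curves in $X\times X$. But the proof does not need the Raskind--Spiess presentation of relations at all. By definition $T(X\times X)\subset\CH^2(X\times X)_0$, and $\CH^2$ is zero-cycles modulo rational equivalence; so $N\,D_1\boxtimes D_2=0$ in $\CH^2(X\times X)$ already means $N\,D_1\boxtimes D_2=\sum_i \divv_{C_i}(f_i)$ for finitely many irreducible curves $C_i\subset X_{\overline{\Q}}\times X_{\overline{\Q}}$ and $f_i\in\overline{\Q}(C_i)^\times$, which is exactly the data the lemma demands. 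Raskind--Spiess enters the paper to identify $T(X\times X)$ with the Somekawa $K$-group, which is useful for the compatibility diagrams later, but the lemma only requires the definition of rational equivalence in codimension two. With that simplification, your enumerate-and-test description of the algorithm (semi-decidable, terminating precisely under Conjecture~\ref{conj:BB}) is exactly what the paper intends.
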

\begin{proof}
Beilinson--Bloch implies that we can write $D_1 \boxtimes D_2 $ as $\frac{1}{n}\sum _i \divv _{C_i }(f_i )$ for some $n$, $C_i$ and $f_i$ as above. Note that, as a consequence of this, there is an algorithm to find such $C_i $ and $f_i$, for the simple reason that the collection of all possible $C_i $ and $f_i$ is countable.  Hence the Lemma follows from Lemma \ref{lemma:find_div}.
\end{proof}
We can now give the proof of Theorem \ref{thm:only_one} from the introduction.
\begin{proof}[Proof of Theorem \ref{thm:only_one}]
Let $D_1 =\sum m_i Q_i $.
By Lemma \ref{lemma:IE}, we have
\begin{align*}
h(P_1 ,\Delta (b)+P_2 ) & =\sum _i m_i h(P_i -b ,I/I^3 (b)+P_2 ) \\
& =\sum _i m_i h(P_i)+h(P_i -b,P_2 -P_i +b).
\end{align*}
By Lemma \ref{lemma:only_one}, Beilinson--Bloch implies we can write each $h(P_i -b,P_2 -P_i +b)$ as $\frac{1}{n_i }\sum _j h_{\iota _{ij} }(E_{ij})$ for some positive integers $n_i$, curves $\iota _{ij}:C_{ij}\hookrightarrow X^2 $ and principal divisors $E_{ij}$ on $C_{ij}$ such that $\sum _j E_{ij}= n_i (P_i -b)\boxtimes (P_2 -P_i +b)$.
\end{proof}
The map $\boxtimes $ has the following compatibility with the cup product in Galois cohomology.
\begin{lemma}
The diagram 
\[
\begin{tikzcd}
\CH ^1 (X)_0 \times \CH ^1 (X)_0 \arrow[r] \arrow[d] & T(X^2 ) \arrow[d] \\
H^1 (K,H^1 _{\et }(X,\Z _p (1)))\times H^1 (K,H^1 _{\et }(X,\Z _p (1))) \arrow[r] & H^2 (K,H^2 _{\et }(X^2 ,\Z _p (2)))
\end{tikzcd}
\]
commutes.
\end{lemma}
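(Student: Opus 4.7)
The plan is to reduce the Somekawa product to intersection theory and then invoke compatibility of cycle class maps with cup products through the Hochschild--Serre spectral sequence. First, I would note that by the very definition of the Somekawa pairing, the $2$-cycle $D_1 \boxtimes D_2$ on $X^2$ coincides with the intersection product $\pi_1^\ast (D_1) \cdot \pi_2^\ast (D_2)$, where $\pi_i: X^2 \to X$ are the projections (the two pullbacks meet properly when $D_1$ and $D_2$ have disjoint support, and one reduces to this case by moving). Consequently, by the multiplicativity of the étale cycle class map, its class in $H^4_{\et}(X^2, \Z_p(2))$ is $\pi_1^\ast \cl_p(D_1) \cup \pi_2^\ast \cl_p(D_2)$.

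Second, I would invoke the Hochschild--Serre (Leray) spectral sequence
\[
E_2^{p,q} = H^p(K, H^q_{\et}(X_{\bar K}^2, \Z_p(2))) \Rightarrow H^{p+q}_{\et}(X^2, \Z_p(2)),
\]
which is a spectral sequence of algebras with respect to cup products. For a homologically trivial divisor $D \in \CH^1(X)_0$, the cycle class $\cl_p(D) \in H^2_{\et}(X, \Z_p(1))$ lies in $F^1$ (homological triviality), and its image in $\gr^1 = H^1(K, H^1_{\et}(X, \Z_p(1)))$ is by definition the left vertical map of the diagram. Hence $\pi_1^\ast\cl_p(D_1)$ and $\pi_2^\ast\cl_p(D_2)$ both lie in $F^1 H^2_{\et}(X^2, \Z_p(1))$, so their cup product lies in $F^2 H^4_{\et}(X^2, \Z_p(2))$, and the induced class in $\gr^2 = H^2(K, H^2_{\et}(X^2, \Z_p(2)))$ is precisely the Galois cohomology cup product of the two Abel--Jacobi classes, pushed through the Künneth inclusion $H^1_{\et}(X) \otimes H^1_{\et}(X) \hookrightarrow H^2_{\et}(X^2)$ induced by $\pi_1^\ast \otimes \pi_2^\ast$. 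This is exactly the bottom horizontal arrow of the diagram applied to $(\cl_p(D_1), \cl_p(D_2))$.

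Third, it remains to verify that the class of $D_1 \boxtimes D_2 \in T(X^2)$ in $\gr^2$ really is the image under the right vertical map. The spectral sequence definition of the cycle class map sends a cycle whose class vanishes in all lower-filtration graded pieces to the first non-vanishing piece. By homological triviality the class vanishes in $\gr^0 = H^0(K, H^4)$; and the vanishing in $\gr^1 = H^1(K, H^3_{\et}(X^2, \Z_p(2)))$ follows from the fact that this group contains the Abel--Jacobi image in $H^1(K, V \oplus V)$ corresponding to the two projections to $\Jac(X)$ (via Künneth and Poincaré duality), which vanishes precisely because $D_1 \boxtimes D_2 \in T(X^2) = \Ker(\CH^2(X^2)_0 \to \Jac(X)^2)$. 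Combining all three steps gives commutativity.

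The main technical obstacle is the bookkeeping in step three: identifying the Abel--Jacobi map from $\CH^2(X^2)_0$ to $\Alb(X^2) = \Jac(X)^2$ with the $\gr^1$-component of the spectral-sequence cycle class map, and matching Poincaré-duality conventions with the Künneth decomposition of $H^3_{\et}(X^2)$, so that ``Albanese kernel'' translates precisely into ``vanishing in $E_\infty^{1,3}$''. Once this is set up, steps one, two, and four are formal consequences of the algebra structure on Leray.
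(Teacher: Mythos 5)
The paper does not supply a proof of this lemma; it is cited to Raskind--Spie\ss\ \cite{RS}, so there is nothing internal to compare against. Your argument via the multiplicative Leray/Hochschild--Serre spectral sequence is the standard route and is, in outline, correct: the Somekawa product coincides with the external intersection $\pi_1^\ast D_1 \cdot \pi_2^\ast D_2$, multiplicativity of the cycle class map turns this into a cup product in $H^4_{\et}(X^2, \Z_p(2))$, and the algebra structure on the spectral sequence transports the cup product of the two $\gr^1$ classes to a $\gr^2$ class. Two small points. First, the parenthetical ``meet properly when $D_1$ and $D_2$ have disjoint support'' is unnecessary and slightly misleading: $\pi_1^\ast D_1 = D_1 \times X$ and $\pi_2^\ast D_2 = X \times D_2$ always intersect transversally in $X^2$, with intersection $D_1 \boxtimes D_2$, whether or not $|D_1| \cap |D_2| = \emptyset$. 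Second, your third step is where the real content sits, and you are right to flag it: one needs that the filtered cycle class map $\CH^2(X^2)_0 \to F^1 H^4_{\et}(X^2,\Z_p(2)) \to \gr^1 = H^1(K, H^3_{\et}(X^2_{\bar K}, \Z_p(2)))$ agrees, under the K\"unneth isomorphism $H^3_{\et}(X^2_{\bar K},\Z_p(2)) \cong H^1_{\et}(X_{\bar K},\Z_p(1))^{\oplus 2}$, with the Albanese map $\CH^2(X^2)_0 \to \Jac(X)^2$ followed by the Kummer map; only then does membership in $T(X^2)$ translate into vanishing in $E_\infty^{1,3}$. This compatibility of the \'etale Abel--Jacobi map with the Albanese (for the top graded piece of the coniveau filtration on $H^3$) is standard but is a genuine lemma, not a formal consequence of the spectral sequence. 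You have identified the right gap; filling it would make the proof complete.
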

\begin{proof}
This is a consequence of the compatibility of the \'etale cycle class maps with cup products. This gives a commutative diagram
\[
\begin{tikzcd}
\CH ^1 (X) \times \CH ^1 (X) \arrow[r] \arrow[d] & \CH ^2 (X^2 ) \arrow[d] \\
H^2 _{\et } (X_K ,\Z _p (1)) \times H^2 _{\et } (X_K ,\Z _p (1)) \arrow[r] & H^4 _{\et } (X_K ,\Z _p (2)).
\end{tikzcd}
\]
Under the Grothendieck--Leray spectral sequence, the cup product in \'etale cohomology induces the cup product
\[
H^1 (K,H^1 _{\et }(X,\Z _p (1)))\times H^1 (K,H^1 _{\et }(X,\Z _p (1))) \to H^2 (K,H^2 _{\et }(X^2 ,\Z _p (2)))
\]
in Galois cohomology (see \cite[Proof of Proposition 2.4]{yamazaki} for a more general argument).
\end{proof}
In fact, more is true: by work of Kahn--Yamazaki \cite{KY} and Mochizuki \cite{mochizuki}, we can interpret the Somekawa product as a Yoneda product in the derived category of motives. In particular, if there was an underlying abelian category of mixed motives, the Somekawa product would be exactly the obstruction to concatenating the motivic extensions by the classes in $\CH ^1 (X)_0$.

More generally, for any two smooth projective irreducible varieties $X_1 $ and $X_2 $ of dimensions $d_1 $ and $d_2$ we can consider a product
\[
\CH ^{d_1 }(X_1 )_0 \times \CH ^{d_2 }(X_2 )_0 \to T(X_1 \times X_2 ).
\]
whose image should again be torsion by the Beilinson--Bloch conjectures.
\subsection{A space of motivic mixed extensions}
We now give a candidate for a motivic refinement of a nonabelian Galois cohomology variety. As is explained below, this should be seen as a generalisation of the \'etale regulator map
\[
\CH^2 (X^2 ,1)\to H^1 (K,H^2 _{\et }(X^2 _{\overline{K}},\Z _p (2)))
\]
and the description of $\CH ^2 (X^2 ,1)$ in terms of the Gersten resolution.
Define $\mathcal{M}(X\times X)$ to be the set of tuples $(Z_1 ,Z_2 ,\sum (C_i ,D_i ))$,  where

\begin{enumerate}
\item $Z_1 $ and $Z_2 $ are degree zero divisors on $X$.
\item The $C_i $ are a finite collection of curves on $X^2 $.
\item $D_i$ is a principal divisor on $C_i $ (so $\sum (C_i ,D_i )$ is an element of the direct sum, over $C\in (X^2 )^{(1)}$, of $\Div ^0 (C)$).
\item 
\[
Z_1 \boxtimes Z_2 =\sum _i D_i
\]
in $Z^2 (X^2 )$,
\end{enumerate}
modulo the equivalence relation generated by the following relations:
\begin{align*}
& (Z_1 ,Z_2 ,(C_i ,D_i ))\sim (Z_1 +\divv (g),Z_2 ,\sum (C_i ,D_i )+(\pi _2 ^* Z_2 ,\divv (g)) \\
& (Z_1 ,Z_2 ,(C_i ,D_i ))\sim (Z_1 ,Z_2+\divv (g) ,\sum (C_i ,D_i )+(\pi _1 ^* Z_1 ,\divv (g)) \\
& (Z_1 ,Z_2 ,(C_i ,D_i ))\sim (Z_1 ,Z_2 ,\sum (C_i ,D_i )+\partial (\{ g_1 ,g_2 \} )) 
\end{align*}
where $\partial \{ g_1 ,g_2 \} $ is the image of $\{ g_1 ,g_2 \} \in K^M _2 (K(X^2 ))$ in $\oplus _{C\in (X^2 )^{(1)}}\Div ^0 (C)$ under the residue map.
%
We obtain a map
\[
\mathcal{M}(X\times X)\to \CH^1 (X)_0 \times \CH^1 (X)_0
\]
which lands in the kernel of the map to $T(X\times X)$.
Let $|C|:=\cup C_i $. Let $N\subset V^{\otimes 2}$ be the the first step of the coniveau filtration, i.e. the subspace generated by $H^2 _C (X^2 ,\Q _p (2))$ for curves $C \subset X^2 $. Let $H^1 (K,U(\Q _p ,V,V^{\otimes 2})/N)_{s}\subset H^1 (K,U(\Q _p ,V,V^{\otimes 2}/N)$ denote the set of mixed extensions whose image in $H^1 (K,V^* \otimes V^{\otimes 2}/N)$ lies in the image of $H^1 (K,V)$ via the left multiplication map
\[
V\to V^* \otimes V^{\otimes 2}/N
\]
(note that via this map $V$ is a direct summand of $V^* \otimes V^{\otimes 2}/N$).

We recall that the the group $\CH ^2 (X^2 ,1)$ can be identified with the cohomology of the complex
\begin{equation}\label{eqn:gersten}
K^2 _M (X^2 )\to \oplus _{C\in (X^2 )^{(1)}}K(C)^\times \to \oplus _{P\in (X^2 )^{(2)}}\Z
\end{equation}
This identification is the composite of the isomorphism \cite[Theorem 2.5]{landsburg}
\[
\CH ^2 (X^2 ,1)\simeq H^1 (X^2 ,\mathcal{K}_2 )
\]
with the identification of $H^1 (X^2 ,\mathcal{K}_2 )$ with the cohomology of the complex \eqref{eqn:gersten} coming from Quillen's proof of Gersten's conjecture \cite{quillen}.
\begin{proposition}\label{prop:bigdiagram}
We have a commutative diagram of pointed sets with exact rows
\[
\begin{tikzcd}
\CH ^2 (X^2 ,1) \arrow[d] \arrow[r] & \mathcal{M}(X^2 ) \arrow[d] \arrow[r] & {\CH ^1 (X)_0 \times \CH ^1 (X)_0} \arrow[d] \arrow[r] & T(X^2 ) \arrow[d] \\
H^1 (K,V^{\otimes 2}/N) \arrow[r]           & {H^1 (K,U(\Q _p ,V,V^{\otimes 2}/N))_{s}} \arrow[r]           & H^1 (K,V)\times H^1 (K,V) \arrow[r]           & H^2 (K,V^{\otimes 2}).
\end{tikzcd}
\]
where the first and third vertical maps are the usual etale regulator maps and the second vertical map
\[
\mathcal{M}(X^2 ) \to H^1 (K,U(\Q _p ,V,V^{\otimes 2}/N))
\]
sends $(Z_1 ,Z_2 , \sum (C_i ,D_i ))$ (with $Z_1 \neq 0$ in $\CH ^1 (X_1 )$) to a subquotient of $H^2 _{\et }(X^2 - |C|\cup |Z_1 |\times X ,\Q _p (2))$.
\end{proposition}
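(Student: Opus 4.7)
The plan is to verify the diagram in four stages: first the two rows separately, then construct the middle vertical map, and finally check commutativity of the three squares.

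For the top row, exactness at $\CH^1(X)_0 \times \CH^1(X)_0$ will follow from the Somekawa product description (\cite{RS}) of $T(X^2)$ as the cokernel of the tame-symbol map from $K_2^M$ of the function field of $X^2$ into $\bigoplus_{C \subset X^2} \Div(C)$, restricted to the Picard-degree-zero pieces: a pair $([Z_1], [Z_2])$ vanishes in $T(X^2)$ precisely when one can exhibit a cycle-level decomposition $Z_1 \boxtimes Z_2 = \sum \divv_{C_i}(D_i)$, which is exactly a lift to $\mathcal{M}(X^2)$. Exactness at $\mathcal{M}(X^2)$ is handled by the equivalence relation: if $[Z_1] = [Z_2] = 0$ in $\CH^1(X)_0$, we may replace the representative by one with $Z_1 = Z_2 = 0$, whence $\sum D_i = 0$ in $Z^2(X^2)$, and the tuple descends to a class in $\CH^2(X^2, 1)$ via the Gersten-type presentation by pairs $(C, f)$ with $\sum \divv_C(f) = 0$. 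The bottom row is the sequence \eqref{eqn:ES} applied to $(V_0, V_1, V_2) = (\Q_p, V, V^{\otimes 2})$, with the $_s$ restriction cutting down to mixed extensions whose class in $H^1(K, V^* \otimes V^{\otimes 2})$ comes from $H^1(K, V)$ via left multiplication, and with $V^{\otimes 2}$ replaced by $V^{\otimes 2}/N$---a modification forced by the fact that the geometric construction below is only well-defined modulo the coniveau subspace $N$.

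The construction of the middle vertical map is the heart of the proposition. Given $(Z_1, Z_2, (C_i, D_i))$, set $U := X^2 \setminus (|C| \cup |Z_1| \times X)$ and consider the localisation sequence
\[
H^2(X^2, \Q_p(2)) \to H^2(U, \Q_p(2)) \to H^3_{|C| \cup |Z_1| \times X}(X^2, \Q_p(2)) \to H^3(X^2, \Q_p(2)).
\]
Purity identifies $H^3_{|Z_1| \times X}(X^2, \Q_p(2))$ with $\bigoplus_{z \in |Z_1|} H^1_{\et}(X, \Q_p(1))$, a direct sum of copies of $V$, and $H^3_{|C_i|}(X^2, \Q_p(2))$ with $H^1_{\et}(C_i, \Q_p(1))$; the latter surjects onto $V \oplus V$ through the two projections $C_i \to X$. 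The data of the rational functions $D_i$ on $C_i$ gives a lift of the cycle class of $Z_1 \boxtimes Z_2$ through $H^3_{|C|}$, while the divisor $|Z_1| \times X$ gives a compatible lift through $H^3_{|Z_1| \times X}$, and these are glued by the condition $Z_1 \boxtimes Z_2 = \sum \divv_{C_i}(D_i)$. Pulling back the localisation sequence along this combined lift produces a three-step filtered Galois subquotient of $H^2(U, \Q_p(2))$ with graded pieces $\Q_p$, $V$ (from the $|Z_1| \times X$ contribution), and $V^{\otimes 2}$ (the relevant Künneth summand of $H^2(X^2, \Q_p(2))$). The $_s$ condition is automatic because the second graded piece arises from the class of $Z_2$ in $H^1(K, V)$ via the left-multiplication map.

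The hard part is well-definedness of this construction under the equivalence relation on $\mathcal{M}(X^2)$: altering the data by the image of a class in $K_2^M(X^2)$ under the tame symbol, or changing $Z_i$ within its linear equivalence class, must alter the resulting mixed extension only by a class that vanishes in $H^1(K, V^{\otimes 2}/N)$. The verification will use the compatibility of Gysin sequences under enlargement of the excised subscheme together with the standard description of the tame symbol as a boundary in the localisation sequence of $K$-theory; the quotient by $N$ absorbs exactly the ambiguity coming from replacing one system of curves $\{C_i\}$ by another through which the same zero-cycle factors, since the difference of the two associated extensions lies in the image of $H^2_{C}(X^2, \Q_p(2)) \to V^{\otimes 2}$. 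Once well-definedness is in hand, commutativity of the three squares is by inspection: the rightmost square commutes because the Somekawa product matches the \'etale cup product under cycle class maps (\cite{KY}, \cite{mochizuki}); the middle square commutes by reading off the Künneth components of $|Z_1| \times X$ and $X \times |Z_2|$ from the graded pieces of the mixed extension; and the leftmost square is the standard compatibility between Bloch's higher Chow regulator on $\CH^2(-, 1)$ and the Gysin description of local cohomology.
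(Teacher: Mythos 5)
Your approach is genuinely different from the paper's, and there is a concrete issue in the middle vertical map.

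The paper constructs the mixed extension from a sequence in \emph{relative} cohomology of pairs,
\[
H^1_{\et}(|C|;|D|) \to H^2_{\et}(X^2; |Z_1|\times X \cup |C|) \to H^2_{\et}(X^2; |Z_1|\times X),
\]
in which the principal divisors $D_i$ naturally determine a surjection $H^1_{\et}(|C|;|D|)\to \Q_p$, because the relative term $H^1_{\et}(|C|;|D|)$ receives the Kummer class of the rational function $f_i$ with $\divv(f_i) = D_i$. After taking K\"unneth components one gets a mixed extension with graded pieces $\Q_p$, $V^{\oplus n}$ (with $n$ indexed by $|Z_1|$) and $H^1_{\et}(X-|Z_1|)\otimes H^1(X)/N$, which is then collapsed to $(\Q_p,V,V^{\otimes 2}/N)$ using the biextension structure; the $\CH^2(X^2,1)$ case recovers the Scholl--Jannsen regulator. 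Your route is the dual one: the localisation/Gysin sequence in cohomology \emph{with supports} together with purity. That is a legitimate alternative framing of the same construction.

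However, your key step ``the data of the rational functions $D_i$ on $C_i$ gives a lift of the cycle class of $Z_1\boxtimes Z_2$ through $H^3_{|C|}$'' is not correct as stated. Purity gives $H^3_{|C|}(X^2,\Q_p(2))\simeq H^1(|C|,\Q_p(1))$, but the Kummer class of a non-constant rational function $f_i$ on $C_i$ does not live in $H^1(C_i,\Q_p(1))$; it lives in $H^1(C_i-|D_i|,\Q_p(1))$, or equivalently in the relative group $H^1(C_i;|D_i|,\Q_p(1))$. So the open set you need to work with is $X^2\setminus(|C|\cup|Z_1|\times X)$ \emph{further punctured by} $|D|$ (or its $X\times|Z_2|$ part), and the lift of $\sum D_i$ through local cohomology has to be set up at that level. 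This is precisely what the paper's relative cohomology $H^1_{\et}(|C|;|D|)$ encodes, and what is invisible in your sequence involving only $H^3_{|C|\cup|Z_1|\times X}(X^2,\Q_p(2))$. A second, smaller omission: you assert the second graded piece is $V$ rather than $V^{\oplus n}$; when $|Z_1|$ has several irreducible components there is a genuine $V^{\oplus n}$ step, and one must use the biextension/equivariant structure on mixed extensions (as in \S\ref{biextensionstructure}) to collapse it to $V$ so the resulting class really lands in $H^1(K,U(\Q_p,V,V^{\otimes 2}))_s$. Fixing the first point requires changing the shape of your exact sequence; the second is a one-line appeal to the biextension structure but should be made.
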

\begin{proof}
Given $(Z_1 ,Z_2 , \sum  (C_i ,D_i ))$ as above, let $|C|$ and $|D|$ denote the supports of $\sum C_i $ and $D_i$ respectively. Then from the definition of cohomology with support relative to a closed subset we have a long exact sequence (see e.g. \cite[Proof of Theorem 6.3]{QC1})
\[
\ldots \to H^1 _{\et }(|C|; |D|,\Q _p )\to  H^2 _{\et }(X^2 ; |Z_1 | \times X \cup |C|,\Q _p ) \to H^2 _{\et }(X^2  ; |Z_1 |\times X,\Q _p ) \to \ldots .
\]
Using the equivalence relations above, without loss of generality, we may assume that the principal divisors $D_i$ are nonempty.
Then the principal divisors $D_i$ give a surjection
\begin{equation}\label{eqn:pushout}
H^1 _{\et }(|C|; |D|,\Q _p ) \to \Q _p .
\end{equation}
We have a K\"unneth decomposition
\[
H^2 _{\et }(X^2  ; |Z_1 |\times X,\Q _p )\simeq H^1 _{\et }(X;\Z _1 )\otimes H^1 _{\et }(X)\oplus H^2 (X).
\]
Taking the first K\"unneth component gives an exact sequence
\[
H^1 _{\et }(|C|;|D|,\Q _p )\to E_0 \to \Ker (H^1 _{\et }(X;\Z _1 )\otimes H^1 _{\et }(X)\to H^2 _{\et }(X))\to 0.
\]
Pushing out along \eqref{eqn:pushout} gives a short exact sequence
\[
0\to \Q _p \to E \to \Ker (H^1 _{\et }(X^2 ;|Z_1 |, \Q_p )\otimes H^1 _{\et } (X,\Q _p )\to H^2 _{\et }(X))\to 0.
\]
Hence, over a finite extensions over which all the irreducible components of $|Z_1 |$ are defined, by dualizing we have a mixed extension with graded pieces $\Q _p ,V^{\oplus n}$ and $H^1 _{\et }(X-|Z_1 |,\Q _p (1))\otimes H^1 _{\et }(X,\Q _p (1))$, where $n$ is the size of the support of $Z_1$.
For each $i$, the corresponding extension of $\Q _p $ by $V$ corresponds to the cycle class of $Z_2 $. Using the biextension structure on mixed extensions, we can form a mixed extension with graded pieces $\Q _p ,V, V^{\otimes 2}/N$.

The fact that the image of this mixed extension in $H^1 (K,V)\times H^1 (K,V)$ agrees with the Kummer map from $\CH ^1 (X)_0 ^2 $ follows from the description of the Kummer map given in the introduction.

In the case when we have something in the image of $\CH ^2 (X^2 ,1)$, this recovers the Scholl/Jannsen description of the \'etale regulator
\[
\CH ^2 (X^2 ,1)\to H^1 (K,H^2 _{\et }(X^2 ,\Z _p (2)))
\]
(see \cite[Appendix C.3]{jannsen1990}, \cite{scholl:ext}).
\end{proof}

\begin{remark}
Note that in general the mixed extensions $E(b,z)$ are \textit{not} in the subspace $H^1 (\Q ,U(\Q _p ,V,V^{\otimes 2} ))_s $. As we have have set things up, there are two ways this can fail: firstly if the class of the Gross--Schoen cycle in the Griffiths group is nontrivial \cite{GS}, and secondly because, by Lemma \ref{lemma:IE}, as $b$ and $z$ vary the class of $IE(b,z)$ varies by an element of the image of $H^1 (\Q ,V)^{\oplus 2}$ by $H^1 (\Q ,V^* \otimes V^{\otimes 2}/\Q _p (1))$. The second could easily be accommodated into the definition of mixed motivic extensions above, but the first requires a different set-up, which we outline below.
\end{remark}

\subsection{A generalisation}
More generally, if $X_1 $ and $X_2$ are smooth projective varieties of dimension $d_1 $ and $d_2 $ over a number field $K$, we can form the product
\begin{align*}
\CH ^{p_1 }(X_1 )\times \CH ^{p_2}(X_2 )\to \CH ^{p_1 +p_2 }(X_1 \times X_2 ) \\
(Z_1 ,Z_2 )\mapsto \pi _1 ^* Z_1 \cdot \pi _2 ^* Z_2 ,
\end{align*}
where $\pi _i$ is the projection $X_1 \times X_2 \to X_i $. Then Beilinson's conjecture implies that this pairing, restricted to $\CH ^{p_1 }(X_1 )_0 \times \CH ^{p_2 }(X_2 )_0 $, is torsion. If $\pi _1 ^* Z_1 \cdot \pi _2 ^* Z_2 $ is zero, then we get a union of codimension $p_1 +p_2 -1 $ cycles $W_i $ and rational functions $f_i$ on $W_i$ such that $\pi _1 ^* Z_1 \cdot \pi _2 ^* Z_2 =\sum \divv (f_i )$. Define $\mathcal{M}_{p_1 ,p_2 }(X_1 \times X_2 )$ to be the set of tuples $(Z_1 ,Z_2 ,\sum (W_i ,D_i ))$ where $Z_1 $ is in $Z^{p_1 }(X_1 )$, $Z_2 $ is in $Z^{p_2 }(X_2 )$, $W_i$ is a codimension $p_1 +p_2 -1$ subvariety of $X_1 \times X_2 $, $D_i$ is a principal divisor on $W_i $ such that $\sum _i D_i =\pi _1 ^* Z_1 \cdot \pi _2 ^* Z_2 $, modulo the equivalence relation generated by
\begin{align*}
(Z_1 ,Z_2 ,\sum (W_i ,D_i )) & \sim (Z_1 +\divv _{Y_1} (g_1),Z_2 ,\sum (W_i ,D_i )+(\pi _2 ^* Z_2 \cdot Y_1 ,\pi _2 ^* Z_2 \cdot \divv (g_1 ) )) \\
(Z_1 ,Z_2 ,\sum (W_i ,D_i )) & \sim (Z_1 ,Z_2 +\divv _{Y_2} (g_2),\sum (W_i ,D_i )+(\pi _1 ^* Z_1 \cdot Y_2 ,\pi _1 ^* Z_1 \cdot \divv (g_2 ))) \\
(Z_1 ,Z_2 ,\sum (W_i ,D_i )) & \sim (Z_1 ,Z_2,\sum (W_i ,D_i )+\partial (\{ h_1 ,h_2 \} )).
\end{align*} 
Here $Y_i$ is a codimension $p_i -1$ subvariety of $X_i$, $h_1 $ and $h_2$ are functions on a codimension $p_1 +p_2 -2$ subvariety $V$ of $X_1 \times X_2 $, and $\partial (\{ h_1 ,h_2 \})$ denotes the image of $\{ h_1 ,h_2 \} \in K^M _2 (K(V))$ in $\oplus _{x\in X^{p_1 +p_2 -1}}\Div (\underline{x})$ under the residue map.
We get an exact sequence of pointed sets
\[
\CH ^{p_1 +p_2 }(X_1 \times X_2 ,1)\to \mathcal{M}_{p_1 ,p_2 }(X_1 \times X_2 )\to \CH ^{p_1 }(X_1 )_0 \times \CH ^{p_2 }(X_2 )_0 \to F^2 \CH ^{p_1 +p_2 }(X_1 \times X_2 )
\]
(for our purposes, we can take the definition of $F^2 \CH ^{p_1 +p_2 }(X_1 \times X_2 )$ to be any subgroup of $\CH ^{p_1 +p_2 }(X_1 \times X_2 )_0 $ mapping to zero in $H^1 (K ,H^{2(p_1 +p_2 )-1}(X_1 \times X_2 ,\Q _p (p_1 +p_2 )))$ containing the image of $\CH ^{p_1 }(X_1 )_0 \times \CH ^{p_2 }(X_2 )_0 $).
We also have an exact sequence of pointed sets
\[
H^1 (K,H_1 \otimes H_2 )\to H^1 (K,U(\Q _p ,H_1 ,H_1 \otimes H_2 )_s )\to H^1 (K,H_1 )\times H^1 (K,H_2 )\to H^2 (K, H_1 \otimes H_2 )
\]
where $H_i :=H^{2d_i -1}(X_i ,\Q _p (d_i ))$, and $U(\Q _p ,H_1 ,H_1 \otimes H_2 )_s$ is the subgroup of $U(\Q _p ,H_1 ,H_1 \otimes H_2 )$ consisting of automorphisms whose projection to $\Hom (H_1 ,H_1 \otimes H_2 )$ lies in the image of $H_2 $. The relation between the two exact sequences appears to be more complicated than in Proposition \ref{prop:bigdiagram}, because the coniveau filtration on $H^{2(d_1 +d_2 -1)}(X_1 \times X_2 )$ is more complicated than that of $H^2$ of a surface.

\section{An example}\label{sec:example}
In this section we give an example where `exhibiting the torsion-ness' of an element of the Albanese kernel enables one to find divisors on curves on $X\times X$ which determine the generalised height pairing on $X\times X$, in a situation where it is not clear how to determine the pairing using rational (or even algebraic) points on the curve $X$. The example is somewhat ad hoc. For more systematic methods for exhibiting torsion, see \cite{GL23} and the references therein.
%
%

For example, consider the curve
\[
X:y^2 =f(x):=4x^5 + 8x^4 + 16x^3 + 12x^2 + 8x + 1.
\]
This has Mordell--Weil rank $2$, and $5$ obvious rational points
\[
\{ \infty ,(0,\pm 1),(1,\pm 7)\}.
\]
Hence rational points allow us to compute $h(P_1 ^2 )$ and $h(P_2 ^2 )$, where $P_1 =(0,1)-\infty $ and $P_2 =(1,7)-\infty $. Recall that this is because $h(P_i ^2 )$ is the generalised height of the mixed extension formed from $E_2 (X;\infty ,P_i )$. How can we compute the generalised height $h(P_1 P_2 )$?

Consider the (normalisation of) the curve
\[
C:y^2 =f(x-1/2),z^2 =f(-1/2-x).
\]
inside $X\times X$ (i.e. the normalisation of the subvariety given by the closure of 
\[
\{ ((x_1 ,y_1 ),(x_2 ,y_2 ))\in (X-\infty )^2 :x_1 +x_2 =-1 \}
\]
in $X\times X$. The Jacobian of $C$ is isogenous to $J\times J\times J_1 \times J_2 $, where $J_1 $ is the Jacobian of the curve 
\[
C_1 :y^2 =-16x^5 - 76x^4 - 122x^3 - \frac{147}{2}x^2 - \frac{181}{16}x + \frac{169}{64},
\]
$J_2$ is the Jacobian of the curve
\[
C_2 :y^2 =-16 - 76x - 122x^2 - \frac{147}{2}x^3 - \frac{181}{16}x^4 + \frac{169}{64}x^5 ,
\]
and as usual $J$ is the Jacobian of $X$.
The isogeny decomposition is realised by the maps $C\to C_i $ sending $(x,y,z)$ to $(x^2 ,yz)$ and $(1/x^2 ,yz/x^5 )$ respectively.

The Mordell--Weil rank of the Jacobian of $C_1$ is $2$, and it contains the rational points 
\[
\{ \infty ,(0,\pm -13),(-3/4,\pm 1),(-7/4,\pm 1),(-31/4,\pm 475) \}.
\]
We have the relation
\[
5\cdot (-3/4,1)-(-7/4,1)-(-31/4,475)-3\cdot \infty =0
\]
in $\Jac (C_1 )$. Pulling back to $C$ we obtain the principal divisor
\begin{align*}
D_0 = & 5(\frac{\sqrt{-3}}{2},1,1)+5(\frac{\sqrt{-3}}{2},-1,-1)+5(-\frac{\sqrt{-3}}{2},1,1)+5(-\frac{\sqrt{-3}}{2},-1,-1)\\
& -(\frac{\sqrt{-7}}{2},1,1)-(\frac{\sqrt{-7}}{2},-1,-1)-(-\frac{\sqrt{-7}}{2},1,1)-(-\frac{\sqrt{-7}}{2},-1,-1) \\
& -(\frac{\sqrt{-31}}{2},-14-3\sqrt{-31},-14+3\sqrt{-31}) \\
& -(\frac{\sqrt{-31}}{2},14+3\sqrt{-31},14-3\sqrt{-31}) \\
& -(-\frac{\sqrt{-31}}{2},14+3\sqrt{-31},14-3\sqrt{-31})-(-\frac{\sqrt{-31}}{2},-14-3\sqrt{-31},-14+3\sqrt{-31}) \\
& - 6\infty ^+ -6\infty ^- .
\end{align*}
Here $\infty ^+$ and $\infty ^-$ are the two points of $C$ mapping to $(\infty ,\infty )$ in $X\times X$. It follows that on the hyperelliptic curve
\[
C_3 :y^2 =-16x^10 - 76x^8 - 122x^6 - \frac{147}{2}x^4 - \frac{181}{16}x^2 + \frac{169}{64},
\]
We claim that the image of $D_0$ in $\Sym ^2 J(\Q )$, together with the classes $P_1 ^2$ and $P_2 ^2 $, span $\Sym ^2 J(\Q )\otimes \Q $, and hence that the generalised height of $D_0 $, together with those of $P_1 $ and $P_2$, allow us to determine the generalised height pairing on $\Jac (X)\times \Jac (X)$.
To see this, note that
\begin{align*}
&\Tr ((\zeta _3 ,1)-\infty )\sim -\frac{1}{4}P_2-\frac{1}{2}P_1 \\
&\Tr ((\frac{-1+\sqrt{-7}}{2},1)-\infty )\sim -\frac{1}{4}P_2+\frac{1}{2}P_1 \\
&\Tr ((\frac{-1+\sqrt{-31}}{2},14+3\sqrt{-31})-\infty )\sim  3P_1 . \\
\end{align*}
Hence the class of $D_0$ is given by 
\begin{align*}
& \frac{5}{16}(P_1 ^2 +4P_2 ^2 +4P_1 P_2 )-\frac{1}{16}(P_1 ^2 +4P_2 ^2 -4P_1 P_2 )-9P_2 ^2 \\
& = \frac{1}{4}P_1^2 + \frac{3}{2} P_1 P_2 - 8P_2 ^2 .
\end{align*}

\section{Other remarks}\label{sec:rubbish}
\subsection{Relative completions and motivic Galois groups}
The description given above of the generalised height has the slightly awkward feature of involving mixed extensions of $\Q _p ,V^{\oplus n}$ and $V^{\otimes 2}$ for all $n$. In fact there is a more natural way to describe the Galois cohomology of $U(\Q _p ,V^{\oplus n},V^{\otimes 2})$ (and mixed extensions more generally) which clarifies the situation. Namely, one can use Hain's relative completion \cite{hain:non} to describe Selmer schemes as \textit{algebraic} nonabelian cohomology of algebraic groups arising as suitable (weighted) relative completions $\mathcal{G}$ of Galois groups of $\Q $ with restricted ramification. It follows that fundamental objects are the isotypic components of the unipotent radical of $\mathcal{G}$. This approach is closely related to the recent preprint of Corwin \cite{corwin}.

Hain's work also has the appealing feature of being formulated in terms of sections of an exact sequence, and hence being applicable without the assumption of a rational point. It would be natural to have an algebraic cycle version, which extends (and improves) the cycle-theoretic construction in this paper and includes the cycle-theoretic obstructions of Esnault and Wittenberg \cite{EW}.
\subsection{An analogue for the unit equation}\label{subsec:BS}
In \cite{DCW}, Dan-Cohen and Wewers explain how to determine equations for $X(\Z _p )_{S,2}$ when $X=\mathbb{P}^1 -\{ 0,1,\infty \}$ and $S$ is a set of primes of $\Q $ using standard bilinearity properties of the $p$-adic dilogarithm function. The analogue, for the dilogarithm, of exhibiting that an element in the Albanese kernel of a product of curves is torsion is to exhibit that the elements $\{ p_1 ,p_2 \}$ (for $p_1 ,p_2 \in S$) are torsion in $K_M ^2 (\Q )$. Concretely, this means finding a positive integer $N$, and $n_i \in \Z ,z_i \in \Q ^\times -\{ 1\}$ such that
\[
N\cdot [p_1 ] \otimes [p_2 ]= \sum n_i [z_i ]\otimes [1-z_i ]
\]
in $\Q ^\times \otimes _{\Z }\Q ^\times $. An algorithm for this is given in \cite{milnor}.

One can rephrase this in a language of `motivic higher Albanese sets' as follows. Recall \cite{suslin} the Bloch group of a field $K$ is defined to be the cohomology of the complex
\begin{align*}
& \langle [x]+[y]+[(1-x)/(1-xy)]+[1-xy]+[(1-y)/(1-xy)] |x,y \in K^\times -\{ 1\}, x\neq y^{-1} \rangle \\
& \to \Z [K^\times -\{ 1\} ] \stackrel{[x]\mapsto x\wedge (1-x)}{\longrightarrow}  \wedge ^2 K^\times 
\end{align*}
 Define $\widetilde{B}$ to be the set of triples $(x,y,\sum n_i [z_i ])$ in $K^\times \times K^\times \times \Z [K^\times -1]$ such that
\[
x\otimes y =\sum n_i z_i \otimes (1-z_i )
\]
modulo the equivalence relation
\[
(x,y,\sum n_i [z_i ])\sim (x',y',\sum n' _i [z' _i ])
\]
if $\sum n_i [z_i ]-\sum n_i ' [z_i ' ]$ lies in the subspace of $\Z [K^\times -1]$ generated by 
\[
[a]+[b]+[\frac{1-a}{1-ab}]+[1-ab]+[\frac{1-b}{1-ab}].
\]
We get a commutative diagram of pointed sets with exact rows
\begin{small}
\[
\begin{tikzcd}
1 \arrow[r] & B_2 (K) \arrow[d] \arrow[r] & \widetilde{B} \arrow[d] \arrow[r] & K^\times \times K^\times \arrow[d] \arrow[r] & K^M _2 (K) \arrow[d]  \\
1 \arrow[r] & H^1  (K,\Z _p (2)) \arrow[r]           & H^1  (K,U(\Z _p ,\Z _p (1),\Z _p (2))) \arrow[r]           & H^1  (K,\Z _p (1))^2 \arrow[r]           &     H^2 (K,\Z _p (2))        
\end{tikzcd}
\]
\end{small}
\subsection{Geometric analogues}\label{subsec:geom}
Let $S$ be a smooth projective curve with generic point $\eta $ and $\pi :X\to S$ a smooth relative curve. Then the height pairing on $X_{\eta }$ coincides with the intersection product
\[
\CH ^1 (X)\times \CH ^1 (X)\to \CH ^2 (X)
\]
composed with the push forward $\CH ^2 (X)\to \CH ^1 (S)$. An obvious geometric analogue of the product considered in section 3 is the map
\[
\CH^1 (X)\times \CH ^1 (X)\to \CH^2 (X\times _S X)
\]
defined by sending $(Z_1 ,Z_2 )$ to $(\pi _1 ^* Z )\cdot (\pi _2 ^* Z)$, where $\pi _1 $ and $\pi _2 $ are the projections $X\times _S X\to X$. Similarly, one could consider the product
\[
\CH ^1 (X)\times \CH ^2 (X\times _S X\times _S X)\to \CH ^2 (X\times _S X)
\]
as a geometric version of the generalised heights arising in quadratic Chabauty.
\subsection{Circle-valued generalised heights}
Given a notion of `motivic' mixed extensions, one can replace $p$-adic generalised heights with Archimedean generalised heights. This leads to `pairings'
\begin{align*}
& \Ker (\CH ^{d_1 }(X_1 )_0 \times \CH^{d_2 }(X_2 )_0 \to \CH ^{d_1 +d_2 }(X_1 \times X_2 )) \\
&  \to \Ext ^1 _{\mathbb{R}-\MHS }(\R ,(H^{2d_1 -1}(X_1 ,\R (d_1 ))\otimes H^{2d_2 -1}(X_2 ,\R (d_2 )))/N) /\reg (\CH ^{d_1 +d_2 }(X_1 \times X_2 ,1)),
\end{align*}
$X_i$ are smooth projective varieties of dimension $d_i$, $N\subset H^{2d-1}(X,\R (d_1 ))\otimes H^{2d_2-1}(X,\R (d_2 ))$ is the image of $H^{2(d_1 +d_2 -1)}$ of all codimension $d_1 +d_2 -1$ subvarieties,
which may be defined as follows. Taking cohomological realisations we obtain a mixed extension of $\mathbb{R}$-mixed Hodge structures with graded pieces $\R ,H^{2d-1}(X_1 ,\R (d_1 ))$ and $(H^{2d_1 -1}(X_1 ,\R (d_1 ))\otimes H^{2d_2 -1}(X_2 ,\R (d_2 )))/N$. Mixed extensions of $\R $-mixed Hodge structures were recently studied by Hain \cite{hain2024periods}. Generalising the usual construction for the height pairing (as in \cite{scholl}) he shows how one may associate to such a mixed extension an element of $\ext ^1 _{\R -\MHS }(\R ,(H^{2d_1 -1}(X_1 ,\R (d_1 ))\otimes H^{2d_2 -1}(X_2 ,\R (d_2 )))/N)$, called the real period of the mixed extension.

In general, if the analogy in \ref{subsec:geom} is reasonable, it is natural to ask whether the product constructed is related to the intersection product on arithmetic Chow groups \cite{BG}
\[
\widehat{\CH }^{d_1 }(X_1 )\times \widehat{\CH } ^{d_2 }(X_2 )\to \widehat{\CH }^{d_1 +d_2 }(X_1 \times X_2 ).
\]
Even more speculatively, is there an interpretation of the usual generalised height as a pairing on (a suitable notion of) $p$-adic arithmetic Chow groups?

Assuming the Beilinson conjectures \cite{beilinson:height} \footnote{and imposing appropriate integrality conditions on our cycles.} we get a pairing somewhat reminiscent of the Mazur--Tate circle pairing \cite{MT}. It is natural to wonder (paraphrasing \cite{BDMT}) what meaning (if any) can be ascribed to this: for example, is there a `circle-pairing Beilinson conjecture' generalising the work of Bertolini and Darmon on the Mazur--Tate circle pairing \cite{BDMT}?

\subsection{$p$-descent for Bloch--Kato Selmer groups}
In \cite{dogra2023}, \cite{dogra20242} and \cite{berry2025refined}, the problem of bounding the dimension of $H^1 _f (\Q ,\wedge ^2 V)$ via the construction of a suitable `Selmer' subspace of $H^1 (\Q ,\wedge ^2 J[2])$ is considered. This is somewhat analogous to bounding the rank of an abelian variety via $2$-descent. The analogue of the usual Kummer exact sequence for an abelian variety gives a commutative diagram
\[
\begin{small}
\begin{tikzcd}
0 \arrow[r]  & \CH ^2 (X^2 ,1)\otimes \F _2 \arrow[d] \arrow[r] & \CH ^2 (X^2 ,1;2) \arrow[d] \arrow[r] & \CH ^2 (X^2 )[2] \arrow[d] \arrow[r] & 0 \\
0 \arrow[r] & H^1 (K,H)\otimes \F _2 \arrow[r] &  H^1 (K,H\otimes \F _2 )   \arrow[r]  & H^2 (K,H)[2] \arrow[r]     & 0
\end{tikzcd}
\end{small} \]
\noindent where $H:=H^2 _{\et }(X^2 ,\Z _2 (2))$. Hence exhibiting nontrivial $2$-torsion in the Chow group of $X^2 $ has a role in (full-fat) quadratic Chabauty analogous to that of exhibiting nontrivial $2$-torsion in the Tate--Shafarevich group of the Jacobian in classical Chabauty.
\subsection{Other varieties}
In the above we used the curves on the surface $X\times X$ to construct mixed extensions. It is natural to wonder whether there is a `simpler' surface (or simpler variety) to work with. In the case of a genus 2 curve $X$, two natural candidates are its Jacobian $J$, and its Kummer variety $K$, since their $H^2$ captures the `interesting part' (c.f. Lemma \ref{lemma:sym_boring}) of $H^2 (X\times X)$. 

This gives a simpler presentation of the example in section \ref{sec:example} as follows. Instead of working with the genus $8$ curve on $X\times X$, one can instead work with its image in $\Jac (X)$, which is a curve of genus 4. The main computational problem is then to find principal divisors on curves which exhibit the torsion-ness of zero-cycles on $\Jac (X)$. For our genus 4 curve, using the hyperelliptic involution this reduces to finding principal divisors on its image in the Kummer variety which lift to rational points on $\Jac (X)$. Hence recent work of Gazaki and Love \cite{GL23} suggests a systematic way to compute generalised heights, by finding \textit{rational curves} on the Kummer variety whose pullbacks contain rational points on $\Jac (X)$.
\bibliography{bib_Kim}
\bibliographystyle{alpha}

\end{document}